\documentclass[12pt, preprint, reqno]{amsart}
\usepackage{graphicx}
\usepackage[dvipsnames]{xcolor}

\usepackage{tikz}
\usetikzlibrary{calc,intersections}
\usepackage{graphicx}

\usepackage{amsfonts, amsmath, amssymb}
\usepackage{amsrefs}
\makeatletter
\renewcommand{\MR}[1]{}
\makeatother

\usepackage{amsthm}
\usepackage{hyperref}
\hypersetup{colorlinks={true},linkcolor={black},citecolor=black}

\usepackage{mathrsfs}

\usepackage{enumitem}

\usepackage[normalem]{ulem}

\newcommand{\characteristic}[1]{1_{#1}}

\usepackage{geometry}

\usepackage[scr=esstix]{mathalfa}

\usepackage{xspace}

\numberwithin{equation}{section}

\usepackage[capitalize]{cleveref}

\renewcommand{\epsilon}{\varepsilon}
\newcommand{\dist}{\mathrm{dist}}

\newcommand{\diam}{\mathrm{diam}}

\newcommand{\p}{\partial}

\newcommand{\R}{{\mathbf R}}

\newtheorem{theorem}{Theorem}[section]
\newtheorem{lemma}[theorem]{Lemma}
\newtheorem{corollary}[theorem]{Corollary}

\newtheorem{proposition}[theorem]{Proposition}

\theoremstyle{definition}
\newtheorem{remark}[theorem]{Remark}

\title[Hausdorff dimension
of the monopolist's free boundary curve]{On
the Hausdorff dimension and singularities of the monopolist's free boundary curve
}
\author{Robert J. McCann}
\address{Departments of Mathematics and Economics, University of Toronto}
\email{mccann@math.utoronto.ca}
\author{Lucas D. O'Brien}
\address{Department of Mathematics, Massachusetts Institute of Technology}
\email{obrie720@mit.edu}
\author{Cale Rankin}
\address{School of Science, University of New South Wales Canberra}
\email{c.rankin@unsw.edu.au}

\thanks{
MSC 2020: 
35R35 % Free boundary problems
Secondary:
35Q91 % PDE in economics and social sciences
90B50 % Management Decision-making
91A65 %Heirarchical games (including Stackelberg games)
91B41 % Contract theory (including moral hazard and adverse selection)
91B43 %Principal Agent 
\\ 
$^*$Robert McCann's work was supported in part by the Canada Research Chairs program CRC-2020-00289 and Natural Sciences and Engineering Research Council of Canada Discovery Grants RGPIN--2020--04162. 
Lucas O'Brien's work was supported in part by a University of Toronto Excellence Award and by an internal fellowship from the Massachusetts Institute of Technology School of Science.
Cale Rankin's work was supported by grants DP220100067 \& DE260100829 of the Australian Research Council.
The authors are grateful to Shibing Chen, Alessio Figalli, Yi Ru-Ya Zhang, and Kelvin Shuangjian Zhang for fruitful exchanges. \\
    \copyright \today}

\begin{document}

   \begin{abstract} 
The simplest genuinely multidimensional monopolist's problem involves  minimizing a linearly perturbed Dirichlet energy among nonnegative convex functions $u$ on an open domain $X \subset [0, \infty)^2$.  The
geometry of the region of strict convexity $\Omega\subset X$ for the unique minimizer $u$ is of central interest.  A relatively
closed portion $X_1^0 \subset X$ of the domain is comprised of line segments starting and ending on $\p X$ along which $u$ is affine.  
For convex polygons and potentially all domains $X \subset \R^2$,   we build on results with Zhang to show that outside $X_1^0 \cup \{u=0\}$,  the free boundary of $\Omega$ is a continuous curve of Hausdorff dimension one,  and that $\Omega$ has density $1/2$ along it (and is $C^\alpha_{\mathrm{loc}}$ for all $0<\alpha<1$), except perhaps at a discrete set of singular points.
We do this by showing that much of the free boundary solves an obstacle problem
whose endogenous obstacle is $C^2$.

From a slightly stronger conclusion,
we deduce the free boundary becomes locally $C^\infty$ outside a closed set whose relative interior is empty. 
In response to the circulation of the present manuscript, 
we received a concurrent but independent work of Chen, Figalli and Zhang
who verify a strengthening sufficient for this partial regularity result; (they show in particular that $\alpha=1$ and the discrete set mentioned above is empty).
\end{abstract}
\maketitle

\section{Introduction}

The problem of identifying optimal strategies facing asymmetric information has a venerable history in economic theory, surveyed in Laffont and Martimort \cite{LaffontMartimort02} and Basov \cite{Basov05}. 
The monopolist's problem  in the principal agent framework of Mussa and Rosen \cite{MussaRosen78} serves as a paradigm in this regard:
a monopolist wishes to price a line of products so as to optimize profits,  facing a field of anonymous buyers whose preferences are known only statistically.
The seminal works of Mirrlees \cite{Mirrlees71},
Spence~\cite{Spence73},
and Myerson \cite{Myerson81}
address variants of this problem in which the spaces --- both of actions and of private information --- are again unidimensional.  In spite of much progress --- see references in \cites{Wilson91, Armstrong96, DaskalakisDeckelbaumTzamos17, KolesnikovSandomirskiyTsyvinskiZimin22+, Kolesnikov23+} and below ---
the multidimensional version of the problem continues to pose vexing challenges.  

A landmark contribution of Rochet and Chon\'e 
\cite{RochetChone98} reduced the question to a problem in the calculus of variations under the simplifying assumptions of linear transfers and quadratic direct costs and preferences.  Under the further assumption 
that the space of actions is
$Y=[0,\infty)^n$,  and that types are uniformly distributed over a domain $X \subset Y$ of the same dimension, this takes the form of maximizing the profit functional
\begin{equation}\label{eq:objectivefunctionaldefinition}
\Pi(u):= \int_{X}[x \cdot \nabla u(x)- u(x) - \frac12 |\nabla u(x)|^2] \ d\mathcal{H}^n
\end{equation}
over the set 
\begin{equation}\label{eq:constraintset}
    \mathcal{U} := \{0 \le u \in W^{1,2}(\overline{X}) \ | \ u \text{ is convex} 
    \}
\end{equation}
of convex indirect utilities; here $\mathcal{H}^k$ denotes $k$-dimensional Hausdorff measure. If $X$ is convex,
the product selected turns out to depend continuously on consumer type $x \in X$ \cites{RochetChone98, CarlierLachand--Robert01}, and to be given by $\nabla u(x)$,  
where $u$ is the unique maximizer of 
\eqref{eq:objectivefunctionaldefinition}--\eqref{eq:constraintset}.
This continuity also extends to $\overline X$ \cite{ChenFigalliZhang26+}.
Generalizations to other distributions of types
\cite{RochetChone98}, preferences \cites{Rochet87, Carlier01, FigalliKimMcCann11}  and transfers \cites{NoldekeSamuelson18, McCannZhang19},
including regularity results \cites{Chen23, McCannRankinZhang25}, have also been given e.g.
by Carlier, Chen, Figalli, Kim, McCann, Noldeke, Rankin, Rochet, Samuelson, Zhang,
and to auction design by Daskalakis, Deckelbaum and Tzamos \cite{DaskalakisDeckelbaumTzamos17}.
Rochet--Chon\'e showed the maximizer of \eqref{eq:objectivefunctionaldefinition}--\eqref{eq:constraintset} was characterized by the convex--ordering of the positive and negative parts of the variational derivative,  conditioned on the action selected. A duality-based characterization \cites{DaskalakisDeckelbaumTzamos17, McCannZhang23+} was eventually parlayed into a characterization via
Euler-Lagrange equations for a free boundary problem by two of us with Zhang~\cite{McCannRankinZhang24+}.   
However, none of these characterizations extend to the more general settings of
\cites{Carlier01, FigalliKimMcCann11, NoldekeSamuelson18, McCannZhang19}.

For convex domains $X \subset \R^n$,  a deeper analysis of the maximizer of \eqref{eq:objectivefunctionaldefinition}--\eqref{eq:constraintset}
performed by two of us with Zhang \cite{McCannRankinZhang24+}
revealed unexpected structures, only partly
foreshadowed by numerics of Rochet--Chon\'e \cite{RochetChone98}, Ekeland--Moreno-Bromberg \cite{EkelandMoreno-Bromberg10}, Mirebeau \cite{Mirebeau16}, Boerma--Tsyvinsky--Zimin  \cite{BoermaTsyvinskiZimin22+}, Carlier--Dupuis--Rochet--Thanassoulis~\cite{CarlierDupiusRochetThanassoulis24} and theory of McCann and Zhang \cites{McCannZhang24+, McCannZhang23+}.  
For $n=2$ the analysis of the free boundary separating
the customization region --- the largest open set $\Omega_n\subset X$ on which $u$ is strictly
convex --- from its complement was initiated in~\cite{McCannRankinZhang24+};
it will be continued here, still for $n=2$.  Line segments  
along which $u$ is affine that have both endpoints on $\p X$ 
form a relatively closed subset of $X$.
For domains satisfying a family of regularity assumptions (see \S 1.1(a)--(d)) --- which include all convex polygons
$X \subset [0,\infty)^2$ ---
we shall show that outside of this subset, 
the free boundary $X \cap \p\Omega_2$ has Hausdorff dimension 1, 
and  $\Omega_2$ has density $1/2$
along $X \cap \p\Omega_2$,  apart from a discrete set of singular points.
This is achieved by building on our results 
with Zhang \cite{McCannRankinZhang24+}
and combining them with techniques 
pioneered for the obstacle problem
with $C^{2,\alpha}$ obstacles
by Caffarelli \cite{Caffarelli77,Caffarelli98}, and
extended to lower regularity obstacles by 
Blank \cite{Blank01}.
Since our obstacle arises endogenously from $u$
and was not known to be better than $C^{1,1}$ previously,  we use \cite{McCannRankinZhang24+}*{Lemma 7.3}  and
its proof to show the obstacle is $C^2$ 
and control the singular points on the free boundary before analyzing the regular points (where $\Omega_2$ has density $1/2$) by exploiting developments in Blank \cite{Blank01}.

A key to understanding the maximizer $u$ of \eqref{eq:objectivefunctionaldefinition} on \eqref{eq:constraintset} is to divide the domain $X$ into the convex sets where the  solution $u$ to the monopolist's problem contacts each of its supporting hyperplanes 
\cite{RochetChone98}. Define an equivalence relation on the closure $\overline X$ of $X$ by setting 
\begin{equation}\label{eq:equivalencerelationdefinition}
    x_1 \sim x_2 \text{ when } \nabla u(x_1) = \nabla u(x_2).
\end{equation} 
Given $x \in \overline X$, denote by $\tilde{x}$ its equivalence class under this relation. Following the terminology of \cite{McCannRankinZhang24+}, we will refer to the convex sets $\tilde{x}$ as \textit{leaves}, or as \textit{rays} if they are $1$-dimensional. We now group points in our domain $X$ based on the dimension of their associated leaves. In particular, we define 
\begin{equation}\label{Omega_i}
\Omega_i := \{x \in \overline{X} \ | \ \tilde{x} \text{ is }(n-i)\text{-dimensional}\} 
\end{equation}
and $X_i := X \cap \Omega_i$. Since $X \subset \R^2$ in the sequel, we can write
\[
\overline{X} = \Omega_0 \cup \Omega_1 \cup \Omega_2. 
\]
When nonempty, the {\em exclusion region} $\Omega_0$ coincides with the set of buyers who are priced out of the market,  the {\em customization region} $\Omega_2$ represents the set of buyers who choose customized products,  and the {\em bunching region} $\Omega_1$ is foliated by rays, with the buyers on each ray choosing the same product.
On each of the regions $\Omega_i$, there is a description of $u$ from the Euler-Lagrange equations for the monopolist's problem. Firstly, by \cite{McCannRankinZhang24+}*{Theorem 1.1}, whenever $\Omega_0 \ne \emptyset$, then $\Omega_0 = \{x \in \overline{X} \ | \ u(x) = 0\}$; in particular, $\Omega_0$ is closed and convex. On $\Omega_2$, $u$ is strictly convex, and by \cite{McCannRankinZhang24+}*{Theorem 1.1}, $\Delta u = 3$ on $\Omega_2$, as anticipated by Rochet and Chon\'e \cite{RochetChone98}.  On $\Omega_1$, the description of $u$ is more complicated: for now, note that by \cite{McCannRankinZhang24+}*{Theorem~1.1}, 
for each $x \in \Omega_1$, the equivalence class $\tilde{x}$ intersects the fixed boundary $\partial X$ in either one, two, or {infinitely} many points, and $u$ is affine along $\tilde{x}$.
Proposition 2.3 of \cite{McCannRankinZhang24+} shows the outward normal distortion
is nonnegative: $(\nabla u(x)-x)\cdot \mathbf n \ge 0$
throughout $\p X$.
Crucially, the boundaries between the regions $\Omega_0, \Omega_1,$ and $\Omega_2$ are determined as part of the solution; therefore, the monopolist's problem can be studied as a free-boundary problem. 

The most interesting free boundary to study in this problem is $\partial \Omega_1 \cap \partial \Omega_2$.  Indeed, since $\Omega_0$ is convex, its boundary is Lipschitz and the regularity of the free boundaries $\partial \Omega_0 \cap \partial \Omega_1$ and $\partial \Omega_0 \cap \partial \Omega_2$ is comparatively easy to understand. In fact, by our work with Zhang \cite{McCannRankinZhang24+}*{Theorem 1.1}, \cite{McCannRankinZhang25}*{Theorem 3}, any point in $(X_0 \cap X_2) \setminus \overline{X_1}$ has a neighbourhood on which $\frac{1}{3}u$ is a convex $C^{1,1}$ solution to the \textit{classical obstacle problem} $\Delta(\frac{1}{3}u) = \characteristic{\{u > 0\}}$, and therefore by the classical theory of the obstacle problem \cite{Caffarelli98}, $(X_2 \cap X_0) \setminus \overline{X_1}$ is locally analytic
\cite{McCannRankinZhang24+}*{Remark 1.2}. Similarly, the free boundary $\partial \Omega_1 \cap \partial \Omega_2$ will be studied via an obstacle problem, in which the obstacle is the minimal convex extension of $u|_{\Omega_1}$. However, since an unpublished result of Caffarelli and Lions   shows $u$ is only $C^{1,1}_{\mathrm{loc}}$ in general 
(see \cite{McCannRankinZhang25}*{especially Remark 5} --- this result
was improved to $C^{1,1}$ on certain domains in  \cite{McCannRankinZhang24+}*{Theorem 4.1}), Caffarelli's regularity theory for the obstacle problem~\cite{Caffarelli98}, which requires the obstacle to be at least $C^{2, \alpha}$, no longer applies, and we must appeal to Blank and Hao's ideas instead~\cites{Blank01,BlankHao15}.

\subsection{Statement of main results}

Let $V \subset \p X$ be the set of points where $\p X$ is not smooth, 
$\mathbf{n}$ denote the outer normal to $\p X$ outside $V$, and use \cite{McCannRankinZhang24+}*{Theorem 1.1 and Proposition 2.3} to 
partition the bunching region as follows:
\begin{align}\label{Omega1+}
\Omega_1^+ &:= \{x \in \Omega_1 \mid \tilde x \cap \p X=\{x_0\}\ \mbox{\rm with}\ (\nabla u(x_0)-x_0)\cdot \mathbf n >0\ \mbox{and}\ x_0 \not\in V \}, 
\\ \nonumber \Omega_1^- &:= \{x \in \Omega_1 \mid \tilde x \cap \p X=\{x_0\}
\ \mbox{\rm with}\  
(\nabla u(x_0)-x_0)\cdot \mathbf n =0 
\ \mbox{\rm or}\ x_0 \in V\},
\\ \nonumber  \Omega_1^0 &:= \{x \in \Omega_1 \mid \#(\tilde x \cap \p X) \ge 2\}.
\end{align}
Set $X_1^{\pm } := X \cap \Omega_1^{\pm}$ and
observe that $X_1^0 := X \cap \Omega_1^0$ is a relatively closed subset of $X$.
Letting $|A|=\mathcal{H}^2(A)$ denote the Lebesgue measure of $A \subset \R^2$,
we make the following assumptions on our domain 
and the corresponding solution $u$ to the monopolist's 
optimization of \eqref{eq:objectivefunctionaldefinition} 
on \eqref{eq:constraintset}:
\begin{itemize}\label{ie}
\item[(a)]{
$X \subset  \R^2$ is open, bounded, and convex;} 
\item[(b)]{
$\p X$ is smooth except at finitely many points 
$\{x_1,\ldots,x_N\} =: V$;}
\item[(c)]\label{boundary regularity hypothesis}
{$u \in C^1(\overline X)$ and $u \in C^{1,1}_{\mathrm{loc}}(\Omega_1^+)$. }
\end{itemize}
For Corollary \ref{C:mainresults} only, 
we invoke the additional hypothesis:
\begin{itemize}
\item[(d)]{$|X_1^-| = 0$ and $\overline {X_1^-}\cap \p \Omega_2$  has a countable intersection with $X\setminus (X_0 \cup X_1^0)$;}
\item[(e)] {$\dim_{\mathcal H} (X_1^0 \cap \p \Omega_2) \le 1$.}
\end{itemize}
Assumptions (a)--(c) are satisfied on all
convex domains \cite[Theorem 1.1]{ChenFigalliZhang26+}  \cite[Theorem 4.1]{McCannRankinZhang24+}, while (d) holds at least on convex polygons 
$X \subset [0,\infty)^2$
\cite[Remark 8.4]{McCannRankinZhang24+}.
Since a reflection and strict maximum principle argument \cite[Lemma 3.2]{McCannRankinZhang24+} for Poisson's equation implies $\#(\tilde x \cap \p \Omega_2) \le 1$ for each $x \in X_1^0$ in all the examples that we are aware of, 
it is natural to conjecture (e) holds quite generally.  We have not verified this conjecture unless $X=(a,a+1)^2$, in which case $\#(X_1^0 \cap \p \Omega_2) \le 1$ by Remark~\ref{R:square}.

Our analysis focuses primarily 
on the {\em tame} free boundary points
$\mathcal{T}:=X_1^+ \cap \p \Omega_2$,  and its partition (following Caffarelli \cite{Caffarelli98} and Blank \cite{Blank01}) into
{\em regular points} $\mathcal R$ and {\em singular points} $\mathcal S$ defined by:
\begin{align}
\label{eq:DefRegular}
\mathcal{R} &:= \{x \in \mathcal{T}:= X_1^+ \cap \p\Omega_2 \mid \lim_{r \to 0} \frac{|\Omega_2 \cap B_r(x)|}{|B_r(x)|} = \frac12\},
\\ \mathcal{S} &:= \mathcal{T} \setminus \mathcal{R}.
\label{eq:DefSingular}
\end{align}

\begin{theorem}[Main results]\label{thm:mainresults}
    Let $u$ maximize the monopolist's profits \eqref{eq:objectivefunctionaldefinition}--\eqref{eq:constraintset} on an open bounded convex $X \subset \mathbf{R}^2$ satisfying (a)--(c) and define   the regular $\mathcal R$ and singular $\mathcal S$ parts of the tame free boundary $\mathcal{T}:= X_1^+ \cap \p \Omega_2$ by \eqref{eq:equivalencerelationdefinition}--\eqref{eq:DefSingular}.
    Then $u \in C^2(X_1^+)$, $\mathcal T$ is relatively open in $\p \Omega_2$, and
    \begin{enumerate}
        \item\label{item:singularsetisdiscrete} (Singular set is discrete) the singular set $\mathcal{S} = \mathcal{T}\setminus \mathcal{R}$ is a discrete subset of $\mathcal{T}$, and in particular is countable;
        \item\label{item:sharpdimensionbound}(Sharp Hausdorff dimension bound) \[
        \mathrm{dim}_{\mathcal{H}}(\mathcal{T}) \leq 1;
        \]
        \item\label{item:holderregularity} (Regular points form bi-H\"older curves) for all $\alpha \in (0,1)$,         each connected component of $\mathcal{R}$ is 
        locally $\alpha$-biH\"{o}lder equivalent to 
        the open unit interval;
        \item\label{item:Dini partial regularity}(Dini condition implies partial regularity) if $\Delta u$
        is Dini continuous on $X^+_1$, 
        then $\mathcal T$ is smooth
        outside of a closed subset whose relative interior is empty --- and moreover, each connected component of $\mathcal R$ is locally $C^1$-smooth.
        \end{enumerate}
\end{theorem}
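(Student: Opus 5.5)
The plan is to recast the tame free boundary as the free boundary of a classical obstacle problem whose obstacle comes from $u$ itself, and then import the Caffarelli--Blank theory. Fix $x_0\in\mathcal T$. Since $x_0\in X_1^+$, the ray $\tilde x_0$ has exactly one endpoint on $\p X$, lying at a smooth boundary point with strictly positive normal distortion. By hypothesis (c), $u\in C^{1,1}$ on a neighbourhood $N$ of $x_0$. On $N\cap\Omega_1$ we use \cite{McCannRankinZhang24+}*{Lemma 7.3} and the structure of its proof: $u$ is affine along each ray, and the rays vary in a $C^{1,1}$ way controlled by the boundary data $\nabla u(x_0)-x_0$ at the endpoint. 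We then define the obstacle $\psi$ on $N$ as the function that is affine along the extended rays, i.e.\ the minimal convex extension of $u|_{\Omega_1}$ past $\p\Omega_2$.

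The first key step is to show that $\psi\in C^2$ near $x_0$, not merely $C^{1,1}$. Parametrize the rays by their boundary endpoints. In that parametrization $\psi(t,s)=a(t)+s\,b(t)$, where $a$ and $b$ are determined by the Euler--Lagrange equation on $\Omega_1$. That equation is a one-dimensional ODE along each ray: a mass balance whose source is $3-\Delta u$ and which is weighted by the ray's length and curvature. The positive distortion keeps this ODE nondegenerate, so the ray directions and $b$ become $C^1$ in $t$, and hence $D^2\psi$ is continuous. Continuity of $D^2u$ on $\Omega_1$ then follows, and $\Delta u=3$ on $\Omega_2$. Matching across the free boundary uses that $\Delta\psi\le 3$, with strict inequality where the rays are nondegenerate. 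Together these give $u\in C^2(X_1^+)$. Relative openness of $\mathcal T$ in $\p\Omega_2$ should follow from the openness of the condition $(\nabla u-x)\cdot\mathbf n>0$ on the boundary and from the continuity of $\nabla u$ up to $\overline X$, which lets us transfer the ray structure to nearby points.

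Next, set $w=(u-\psi)/3$ on $N$. Then $w\ge 0$, $w\in C^{1,1}$, $\{w>0\}=\Omega_2\cap N$, and
\[
\Delta w=\Bigl(1-\tfrac13\Delta\psi\Bigr)\characteristic{\{w>0\}},
\]
with a continuous right-hand side $f=1-\tfrac13\Delta\psi>0$ bounded below near $x_0$. This is exactly Blank's setting \cite{Blank01} of an obstacle problem with merely continuous data. From it we obtain nondegeneracy, quadratic growth, and blowups that are either half-space solutions (points of density $1/2$, which are our regular points $\mathcal R$) or homogeneous quadratic polynomials (singular points).

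For (1), we exploit $n=2$. Near a singular point the blowup is a polynomial $\frac{f(x_0)}2 x^TAx$ with $A\ge 0$ and $\operatorname{tr}A=1$, and $\{w=0\}$ is contained in a thin cusp around a line. Since $\{w=0\}\cap N=\Omega_1\cap N$ is foliated by rays that traverse it transversally to the boundary, a shrinking cusp can only contain short ray segments. We would combine this with the convexity of the leaves and the monotone ordering of the rays to show that singular points cannot accumulate at a point of $\mathcal T$. This is the step I expect to be hardest: Blank's theory alone only gives that the singular set lies in a $C^1$ manifold, not that it is discrete. For (2), $\mathcal R$ is locally a graph by (3), and $\mathcal S$ is countable, so $\dim_{\mathcal H}\mathcal T\le1$. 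For (3), Blank's theorem gives that near a regular point the free boundary is a Reifenberg-flat curve with flatness tending to zero, and such curves are $\alpha$-biH\"older parametrized for every $\alpha<1$. For (4), Dini continuity of $f$ lets Blank's results upgrade the regular points to locally $C^1$. Higher regularity then comes from bootstrapping via hodograph/partial Legendre arguments, once $\psi$ inherits smoothness through the ray ODE. This works on an open dense subset, and the closed exceptional set, which contains $\mathcal S$, has empty relative interior.
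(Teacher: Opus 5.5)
Your outline matches the paper's overall architecture: the endogenous obstacle $u_1$, the reduction to $\Delta v=(3-\Delta u_1)\characteristic{\{v>0\}}$, the theory of Blank and Blank--Hao, and Reifenberg for (3). But item (1), the main new claim, is not proved, and your ``shrinking cusp'' picture would not prove it. The cusp's axis is the ray $\tilde x_0$ itself, so it constrains where neighbouring rays end, not how long they are.

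Two ingredients are missing. First, the whole ray $\tilde x_0$ lies in the contact set and ends at $x_0$, so every blow-up at a singular point vanishes on the half-line $\{s\xi:s<0\}$. This forces $Q_\infty=\xi^\perp\otimes\xi^\perp$. Hence the blow-up is unique and equals $\frac{3-\Delta u_1(x_0)}2(x\cdot\xi^\perp)^2$. It follows that every singular point is a strict local maximum of $R$: otherwise the contact points $x(R(0),t_k)$ would approach $x_0$ from directions uniformly transverse to $\xi$, where this blow-up is positive.

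Second, and decisively, the local-maximum property does not prevent accumulation, since strict local maxima of a continuous function can accumulate at a strict local maximum. You need a compactness argument with moving centres. Take singular $x_k=x(R(t_k),t_k)\to x_\infty$. Then $x_\infty$ is singular by relative closedness of $\mathcal S$, so $R(t_k)\le R(0)$. Put $y_k=x(R(t_k),0)\in\widetilde{x_\infty}$ and rescale $v$ about $x_k$ at scale $r_k=|x_k-y_k|$. The limit is a global solution with constant right-hand side, so its contact set is convex. That set contains a half-line (the limit of the rays $\widetilde{x_k}$) and a point off it (the limit of $(y_k-x_k)/r_k$, transverse by the bi-Lipschitz $(r,t)$ coordinates), so it has positive area. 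On the other hand, the Blank--Hao modulus $\varpi$ depends only on $\lambda$ and $n$, so it applies at every $x_k$. Combined with Blank's measure stability, it forces that contact set to have zero area, which is a contradiction.

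Smaller repairs are needed elsewhere. For $u\in C^2(X_1^+)$, what makes $D^2u_1$ continuous is the continuity of $R(t)$ \cite{McCannRankinZhang24+}*{Theorem 7.3}. It enters through $R^2|\dot\xi|=2|\dot\gamma|(\nabla u-x)\cdot\mathbf n$, through the explicit formula for $3-\Delta u_1$, and through the Lipschitz eigenframe $\xi,\dot\xi/|\dot\xi|$; a vague ``nondegenerate ODE'' does not supply this. There is also no matching across the free boundary. At $r=R(t)$ that formula gives $3-\Delta u_1=R/(R+\xi\times\dot\gamma/|\dot\xi|)>0$, so $\Delta u$ jumps across $\mathcal T$. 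The claim only means that $u$ agrees on $X_1^+$ with the $C^2$ function $u_1$.

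For (2), item (3) does not make $\mathcal R$ a graph, and even the graph of a H\"older function can have dimension above $1$. Use instead that an $\alpha$-H\"older image of an interval has dimension at most $1/\alpha$, let $\alpha\uparrow1$, and add the countable set $\mathcal S$.

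For (4), ``works on an open dense subset'' is exactly what needs proof. The bootstrapping of \cite{McCannRankinZhang24+} requires $R$ to be Lipschitz, which holds where the $C^1$ free boundary is transverse to the rays. The non-transverse set has empty interior: since $\xi$ is Lipschitz, Picard--Lindel\"of would force an arc tangent to $\xi$ to lie in a single ray. But the free boundary is the graph of $R$ in $(r,t)$ coordinates and meets each ray only once.
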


This theorem improves the results of \cite{McCannRankinZhang24+}. There 
$\mathrm{dim}_{\mathcal{H}}(\mathcal{T}) <2$ was proved and the singular set $\mathcal{S}$ was identified but no control on its size was asserted.  The discreteness established here echoes a result proved by Monneau for smoother obstacles~\cite{Monneau03}.
Proposition~\ref{lemma:C^2obstacle} proves $u \in C^2(\Omega_1^+)$,
while Theorem \ref{thm:mainresults}\eqref{item:singularsetisdiscrete} is proved in Theorem~\ref{theorem:singularsetisdiscrete},   \eqref{item:sharpdimensionbound} and \eqref{item:holderregularity} are proved in Corollary~\ref{cor:holderregularityandsharpdimensionbound},
and \eqref{item:Dini partial regularity} is proved in
Proposition \ref{prop:conditionalregularity}.
The set $\mathcal T$ forms a relatively open subset of $\p \Omega_2$ due to assumptions (b)--(c).
Under the additional hypothesis (d) 
we can deduce the following corollary by combining 
Theorem \ref{thm:mainresults} with results of \cite{McCannRankinZhang24+}.
Its proof can be omitted on first reading.

\begin{corollary}
    [Free boundary of customization region]\label{C:mainresults}
   Let $u$ maximize the monopolist's profits \eqref{eq:objectivefunctionaldefinition}--\eqref{eq:constraintset} on a domain $X \subset [0,\infty)^2$ 
   with $V$    satisfying (a)--(d) 
   where 
   $X_i^{(+,-,0)} := X \cap \Omega_i^{(+,-,0)}$ as in 
   \eqref{Omega_i}--\eqref{Omega1+}.
    Then,
    \begin{enumerate}     

    {   \item\label{item+:countablylipschitz}
(Sharp Hausdorff dimension bound)        
        $\dim_{\mathcal H}\big((\p \Omega_2)\setminus        X_1^0\big) \le 1$; }

  { \item\label{item+:singularsetisdiscrete}
(Singular set is countable) 
the closure $\overline{\mathcal{S}'}$ of $\mathcal{S}' :=  \p X_2 \setminus \mathcal{R}'$
has at most a countable intersection with $X\setminus X_1^0$,
where
\begin{align*}
\mathcal{R}' :=\{x \in \p \Omega_2 \mid \lim_{r \to 0} \frac{|\Omega_2 \cap B_r(x)|}{|B_r(x)|} = \frac12\};        
\end{align*}      }     
{  \item\label{item+:quantify fixed}(Quantifying accumulation on fixed boundary) $\overline{\mathcal S'} \cap \p X \subset V \cup \overline{X \cap \p \Omega_2}.$ }      
{\item[(4)]\label{item+:sharpdimensionbound+}
(Extended bounds)
If, in addition, (e) holds then
$\mathrm{dim}_{\mathcal{H}}(\p \Omega_2) \leq 1$;

}
{\item[(5)]\label{item+:quantify fixed++}
if $X_1^0 
\cap \p \Omega_2$
is at most countable, the same is true for $X \cap \overline{\mathcal{S}'}$.}
\end{enumerate}
\end{corollary}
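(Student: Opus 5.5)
The plan is to decompose $\p\Omega_2$, or more precisely $X\cap\p\Omega_2$ together with its trace on $\p X$, according to the partition $\overline X=\Omega_0\cup\Omega_1^+\cup\Omega_1^-\cup\Omega_1^0\cup\Omega_2$. Each piece is then handled either by Theorem~\ref{thm:mainresults} or by results of \cite{McCannRankinZhang24+}. Since $\Omega_2$ is open relative to $\overline X$, every point of $X\cap\p\Omega_2$ lies in $X_0\cup X_1^+\cup X_1^-\cup X_1^0$. We then treat the pieces one at a time.

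\emph{The $X_0$ piece.} Near points of $(X_0\cap\p\Omega_2)\setminus\overline{X_1}$, $\frac13 u$ solves the classical obstacle problem, so this part of the free boundary is locally analytic (\cite{McCannRankinZhang24+}*{Remark 1.2}). It is therefore countably rectifiable and regular; in particular, density $1/2$ holds there. Points of $X_0\cap\p\Omega_2\cap\overline{X_1}$ lie in $\p\Omega_0\cap\p\Omega_1\cap\p\Omega_2$. Since $\Omega_0$ is convex, its boundary is a Lipschitz curve and has dimension $1$. I would also expect the triple-junction points to be countable, by the local analytic structure plus convexity of $\Omega_0$.

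\emph{The $X_1^+$ and $X_1^-$ pieces.} On $\mathcal T=X_1^+\cap\p\Omega_2$, Theorem~\ref{thm:mainresults}\eqref{item:sharpdimensionbound} and \eqref{item:singularsetisdiscrete} give dimension at most $1$ and a discrete singular set. For the $X_1^-$ piece I use (d). Since $|X_1^-|=0$, a point of $X_1^-\cap\p\Omega_2$ is a limit of points of $X_1^+\cup X_0\cup X_1^0$. By (d), the set $\overline{X_1^-}\cap\p\Omega_2\setminus(X_0\cup X_1^0)$ is countable, so it contributes dimension $0$.

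\emph{Assembling (1), (2) and (5).} Combining the pieces gives (1), using countable stability of Hausdorff dimension. For (2), the set $\overline{\mathcal S'}\cap(X\setminus X_1^0)$ is contained in the union of three sets:
\begin{itemize}
\item[(i)] the closure of $\mathcal S$ within $X\setminus X_1^0$;
\item[(ii)] the countable set from (d);
\item[(iii)] the countable set of junction points in $X_0$.
\end{itemize}
The delicate point is that a discrete subset $\mathcal S$ of the relatively open set $\mathcal T$ can accumulate only at $\p\mathcal T$. Those accumulation points lie in $\overline{X_1^-}\cup X_0\cup X_1^0\cup\p X$, so within $X\setminus X_1^0$ they fall into the countable sets already identified. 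A discrete set is itself countable. Hence the whole intersection is countable. Item (5) follows the same way once $X_1^0\cap\p\Omega_2$ is added as a further countable set. Here I would also use that points of $\overline{\mathcal S}$ accumulating in $X_1^0$ lie in $X_1^0\cap\p\Omega_2$, since $X_1^0$ is relatively closed.

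\emph{Items (3) and (4).} For (3), let $z\in\overline{\mathcal S'}\cap\p X$ with $z\notin V$, and suppose $z$ is not a limit of points of $X\cap\p\Omega_2$. Then $\mathcal S'$ near $z$ consists only of boundary points of $\p\Omega_2\cap\p X$. At smooth boundary points with $\Omega_2$ locally adjacent, I would use the boundary structure from \cite{McCannRankinZhang24+} (nonnegative normal distortion and $\Delta u=3$) to show those points are not singular in the relevant sense. This gives a contradiction, so $z\in V\cup\overline{X\cap\p\Omega_2}$. Item (4) then adds (e) to (1), handling $\p\Omega_2\cap\p X\subset\p X$ (dimension $1$) trivially.

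I expect the main obstacle to be controlling how $\mathcal S$ accumulates on $\p\mathcal T$ near the junctions with $X_0$ and $\overline{X_1^-}$, that is, verifying that those accumulation points are genuinely captured by the countable sets supplied by (d) and by the obstacle-problem analysis near $\Omega_0$. The first thing I would try is to show that every such limit point lies in $\overline{X_1^-}\cup X_0\cup X_1^0$, using relative openness of $\mathcal T$ in $\p\Omega_2$.
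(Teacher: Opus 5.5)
Your decomposition mirrors the paper's. Your treatment of (1) and (4) is sound, and (3) is fine in outline. In fact (3) is purely geometric: a small ball $B$ about $z\notin V\cup\overline{X\cap\p\Omega_2}$ meets $X$ in a convex set free of $\p\Omega_2$. So either $B\cap X\subset X_2$, where $\Omega_2$ has density $1/2$ at smooth boundary points, or $B\cap X_2=\emptyset$; no PDE input is needed.

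The genuine gap is in (2), and hence (5). Both rest on your set (iii), the points of $\overline{\mathcal S'}$ lying in $X_0$, and there you give an expectation rather than an argument. This is exactly where the paper does its real work. Accumulation points of the discrete set $\mathcal S$ not covered by (d) land in $X_0$, since (d) explicitly excludes $X_0$. Moreover, points of $X_0\cap\p\Omega_2\cap\overline{X_1}$ may themselves be singular.

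``Local analytic structure plus convexity of $\Omega_0$'' cannot settle this. The analyticity from \cite{McCannRankinZhang24+}*{Remark 1.2} holds only at points of $X_0\cap\p\Omega_2$ away from $\overline{X_1}$, so it says nothing at the junctions themselves. Convexity only makes $\p\Omega_0$ a Lipschitz curve. It does not exclude an uncountable, e.g.\ Cantor-like, set of junctions along that curve where bunching and customization alternate. In particular, nothing you wrote addresses points of $X_0\cap\p\Omega_2\cap\overline{X_1^-}$.

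The paper supplies two ingredients you are missing.

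\textbf{First ingredient.} The paper shows $X\cap\p\Omega_2\cap\overline{X_1^-}\cap\p\Omega_0=\emptyset$. Such a point is a limit of stray rays of length bounded below. Hence it lies on a segment of $\p\Omega_0$ near which $\Omega_0\cup\Omega_2$ has full measure. The reflection and strict maximum principle argument of \cite{McCannRankinZhang24+}*{Lemma 3.2} rules this out.

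\textbf{Second ingredient.} Set $U:=X\cap\p\Omega_0\setminus\overline{X_1^0\cup X_1^-}$. Any endpoint $x_0$ of the analytic part of the free boundary in $U$ is approached by $\Omega_1^+$ rays of length bounded below. Their limit is a segment through $x_0$ lying in $\p\Omega_0$ (since $Du(x_0)=0$) and ending at an endpoint $y_0$ of $\Omega_0\cap\p X$.

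That set $\Omega_0\cap\p X$ is connected, and this is where $X\subset[0,\infty)^2$ is used. Rochet--Chon\'e's symmetry argument gives $Du\in[0,\infty)^2$, so $X\cap\p\Omega_0$ is a concave graph over the antidiagonal. Together with $x\cdot\mathbf n\le 0$ on $\Omega_0\cap\p X$, this leaves only two candidates for $y_0$.

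Continuity of $\diam(\tilde x)$ from \cite{McCannRankinZhang24+}*{Lemma 7.4}, plus the oscillation argument in its proof, then shows $(y_0,x_0)$ misses the free boundary. So $X\cap\p\Omega_2\cap U$ has at most two components, and at most four points of $X_0$ can lie in $\overline{\mathcal S'}$.

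That your proposal never uses the hypothesis $X\subset[0,\infty)^2$ is a symptom of this omission.
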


\begin{proof}
\textit{Proof of (1).} To begin, note the decomposition of $\overline X$ as the union $\overline X = {X_0 \cup X_1^- \cup X_1^0 \cup X_1^+\cup X_2 \cup \p X}$ yields a partition of $\partial \Omega_2 \setminus X^0_1$ into components $X_0 \cap \p \Omega_2$, $X_1^\pm \cap \p \Omega_2$,  and $\p X \cap \p \Omega_2$ since  \cite[Theorem 1.1]{McCannRankinZhang24+} shows $X_2$ is open. We will prove each of these components has Hausdorff dimension $1$. First, $X_1^+ \cap \partial \Omega_2$ has Hausdorff dimension equal to $1$ by Theorem \ref{thm:mainresults}(\ref{item:sharpdimensionbound}). Next, the boundaries of the convex sets $X$ and $\Omega_0=\{u=0\}$ are each covered by Lipschitz curves. Finally assumption (d) implies $(\overline{X_1^-} \cap \p \Omega_2) \setminus (X_0 \cup X^0_1)$ is countable.

\textit{Proof of (2).} To establish (2), set $FB := X \cap \p \Omega_2 \subset \p X_0 \cup \p X_1^+ \cup \p X_1^0 \cup \p X_1^-$;  we must show the set $\overline{\mathcal{S}'}$ of {\em singular points} in $FB \setminus  X^0_1$ 
is countable. We decompose $FB \setminus X^0_1$ into the following sets and show the intersection 
of each with $\overline{\mathcal{S}'}$ is countable:
\begin{align}
\label{eq:decomposition}  FB \setminus X^0_1 &\subset [FB \cap X_1^+]\cup [(FB \cap \overline{X_1^-}) \setminus (X_0 \cup X^0_1)]\cup [FB \cap
\overline{X_1^-} \cap \p \Omega_0] \\
\nonumber  &\quad \cup   [FB \cap \p \Omega_0 \setminus \overline{X_1^0 \cup X_1^-}] . 
\end{align}
First, the intersection of $\overline{\mathcal{S}'}$ with $X_{1}^+$ is the discrete set quantified by Theorem \ref{thm:mainresults}(\ref{item:singularsetisdiscrete}). 
Next, $(\overline{\mathcal{S}'} \cap \overline{X_1^-}) \setminus (X_0 \cup X^0_1)$ 
is closed and countable by hypothesis (d).    We claim the set $FB   \cap \overline{X_1^-}\cap \p \Omega_0$ is empty. For a contradiction take $x \in FB  \cap \overline{X_1^-}\cap \p \Omega_0$, then $x$ is a limit of stray rays with length bounded below and so lies in a line segment contained in the boundary of $\Omega_0$. Since at most one point in this line segment, namely the endpoint, lies in $\overline{X_1^+}$ and the stray rays have measure $0$, we see provided $x$ is not the endpoint of the line segment in $\partial\Omega_0$, $B_\epsilon(x) \setminus( \Omega_0 \cup \Omega_2)$ has measure $0$ for some $\epsilon > 0$. Thus,  a maximum principle and reflection argument, as in  \cite[Lemma 3.2]{McCannRankinZhang24+}, yields a contradiction.

The remaining subset of $\overline{\mathcal{S}'}$ to consider is elements in $U:= X\cap \p \Omega_0 \setminus \overline{X_1^0 \cup X_1^-}$. We first establish that $\Omega_0$, which is empty or satisfies $\Omega_0 = \{u=0\}$ by \cite[Theorem 1.1]{McCannRankinZhang24+}, has $\Omega_0 \cap \p X$ connected. This follows from considerations laid out in 
Rochet and Chon\'e \cite[Proposition 1]{RochetChone98}: reflection symmetry across the axes and concavity of the principals' maximization problem
implies $Du \in [0,\infty)^2$,  hence $X \cap \p \Omega_0$ is a graph over the antidiagonal of a concave curve with Lipschitz constant at most $1$. Next,  
$x \cdot \bf n \le 0$ along $\Omega_0 \cap \p X$ by \cite[Proposition 2.3]{McCannRankinZhang24+}, which implies the curve $\Omega_0 \cap \p X$
intersects only the portion of $\p X$ which is parameterized
as a convex curve over the antidiagonal.
Since the difference of a concave and convex function
which does not vanish on an interval can have at most two zeros,
it follows that $\Omega_0 \cap \p X$ is connected. 

Thus, we may now consider the potentially countably many components $U_i$ of 
$U:= X\cap \p \Omega_0 \setminus \overline{
X_1^0 \cup X_1^-}$, 
and establish that $FB \cap U$ has in fact at most two components. 
Indeed,  if $x_0 \in FB \cap U_i$ lies outside the (relative) interior of $FB \cap U_i$,  each neighbourhood of $x_0$ intersects $X_1$, and in fact $X_1^+$ 
since $x_0  \not\in \overline{X_1^0 \cup X_1^-}$.
Thus a sequence of rays starting at points $y_i \in \Omega_1^+ \cap \p X$ comes arbitrarily near to $x_0$. Since $x_0$ is interior to $X$ these rays have length bounded below and their limit is a line segment containing $x_0 \in \Omega_0$. Since $x_0\in \Omega_0$ implies $Du(x_0) = 0$, this line segment lies in the boundary of $\widetilde{x_0} = \Omega_0$, and extends to one of only two (by connectedness from preceding paragraph) endpoints $y_0$ of 
$\Omega_0 \cap \p X$.  {We shall confirm} 
that at most two such points $x_0$ can exist by proving the relative interior $(y_0,x_0)$ of this line segment is disjoint from $FB$. 
The analysis divides into two cases: either (i)  $y_0$ is the endpoint of an interval in $\Omega_1^+ \cap \p X$,  or (ii) a sequence of such intervals converge to $y_0$.
In case (i),  since  \cite[Lemma 7.4]{McCannRankinZhang24+} asserts continuity of $\diam(\tilde x)$ on such intervals,  it follows that one side of $(y_0,x_0)$ consists entirely of
$\Omega_1^+$ while the other side consists entirely of the convex set $\Omega_0$, so $(y_0,x_0)$ is disjoint from $\p \Omega_2$ as desired.  In case (ii),  if some 
$z \in (y_0,x_0) \cap \p \Omega_2$ it means $\diam(\tilde x)$ attains every value in $(|z-y_0|,|x_0-y_0|)$ in each neighbourhood of $y_0$ along $\p X$. 
But such oscillations can be ruled out as in the proof of \cite[Lemma 7.4]{McCannRankinZhang24+}.  Thus $U \cap FB$ consists of at most two connected components.

The relative interiors of these connected components are analytic curves disjoint from the singular set, by \cite[Remark 1.2]{McCannRankinZhang24+}. We've established each set on the right-hand side of \eqref{eq:decomposition} has a countable intersection with $\overline{\mathcal S'}$, thereby completing the proof of 
\eqref{item+:singularsetisdiscrete}. 

For the proof of (3) we note the 
endpoints of the (at most two) connected components of $FB \cap U$
are the only four points
in $\Omega_0$ that can lie in $\overline{\mathcal{S}'}$.  

Assertions (4)--(5) are straightforward consequences of (1)--(3) and the additional hypothesis (e). 
\end{proof}

\begin{remark}[Rochet and Chon\'e's square example \cite{RochetChone98}]\label{R:square}
In the case of the square $X =(a,a+1)^2$ with $a>0$, 
the preceding corollary yields $\mathrm{dim}_{\mathcal{H}}(\partial \Omega_2) \leq 1$
and $\overline{\mathcal{S}'}$
at most countable since 
$\#(X_1^0 \cap \p \Omega_2) \le 1$ 
and $\#(\p X \cap \overline{X \cap \p \Omega_2}) \le 2$
by \cite[Theorem 1.5 and Lemma 8.7]{McCannRankinZhang24+}.
The square's geometry models buyers whose two characteristics $x=(x^1,x^2)$ are distributed independently and uniformly over two intervals; the economic relevance of this has made it a prominent but challenging 
test case for
 numerics and theory 
\cite{EkelandMoreno-Bromberg10,Mirebeau16,BoermaTsyvinskiZimin22+,CarlierDupiusRochetThanassoulis24, McCannRankinZhang24+, McCannZhang23+, McCannZhang24+}.
\end{remark}

The following remark improves the conditional result of Theorem \ref{thm:mainresults}\eqref{item:Dini partial regularity} by showing $C^\infty$-regularity outside a topologically small set unconditionally.
%without requiring Dini continuity of $\Delta u$.

\begin{remark}[Note added in revision]\label{R:revision}
In response to the circulation of the present manuscript,
we received a concurrent but independent work of Chen, Figalli and Zhang \cite{ChenFigalliZhang26+} 
that, in addition to verifying the boundary regularity required for (c), 
improves on Theorem \ref{thm:mainresults}\eqref{item:singularsetisdiscrete},\eqref{item:holderregularity} for $\p X \in C^{1,1}$ showing that $\mathcal S =\emptyset$ and $\alpha=1$.  Under (a)--(b), this improved regularity combines with our proof of Proposition~\ref{prop:conditionalregularity} to yield a better conclusion than 
Theorem \ref{thm:mainresults}\eqref{item:Dini partial regularity}: namely, unconditional
smoothness of the tame free boundary $\mathcal T$ 
outside of a closed subset whose interior relative to $\mathcal T$ is empty
(independent of whether or not $\Delta u$ is Dini continuous throughout $X_1^+$).
\end{remark}

\section{The monopolist's free boundary problem in the plane}\label{sec:preliminaryresultsonmonopolist'sproblem}

\subsection{The obstacle problem at tame rays}\label{subsec:freeboundaryproblemattamerays}

In what follows, $u$ denotes the solution to the monopolist's problem, i.e. the maximizer of $\Pi(\cdot)$ on $\mathcal{U}$ from \eqref{eq:objectivefunctionaldefinition}--\eqref{eq:constraintset}. For each point $x$ in the free boundary \[\Gamma : = X\cap \partial \Omega_1 
\cap \partial \Omega_2,\] the equivalence class $\tilde{x}$ is a ray which intersects the fixed boundary $\partial X$ by \cite{McCannRankinZhang24+}*{Theorem 1.1}. Henceforth, we shall focus our attention on the tame free boundary points 
$\mathcal{T}=X_1^+ \cap \p \Omega_2$ of 
\eqref{Omega1+}--\eqref{eq:DefRegular},
and the rays $\tilde x$ for which $x \in \mathcal T$.
Such rays are called tame and defined by the properties that 
$ \tilde{x} \cap \partial X =\{x_0\}$ with $\p X$ smooth at $x_0$, and 
\begin{equation}\label{eq:neumanncondition}
    (\nabla u(x_0) - x_0) \cdot \mathbf{n} > 0.
\end{equation}

We now reduce the problem of studying the free boundary $\Gamma$ near a tame free boundary point to the obstacle problem, as was first done in \cite{McCannRankinZhang24+}*{Section 7.1}. We begin by recalling \cite{McCannRankinZhang24+}*{Lemma 6.1}, which allows us to find a neighbourhood of a tame ray which is foliated by tame rays. 

\begin{lemma}[Local foliation about each tame ray]\label{lemma:localfoliationabouttamerays}
     Let $x \in \Gamma$ be a tame free boundary point with $\{x_0\} = \tilde{x} \cap \partial X$. Then, there exist $\epsilon, r_0, \eta_0 > 0$ and a smooth unit-speed curve $\gamma:[-\epsilon, \epsilon] \to \partial X$ such that for each $t \in [-\epsilon, \epsilon]$,
     \begin{itemize}
         \item $\diam(\widetilde{\gamma(t)})> r_0$,
         \item $\widetilde{\gamma(t)} \cap \partial X = \{\gamma(t)\}$, and
         \item $(\nabla u(\gamma(t)) - \gamma(t))\cdot \mathbf{n} \geq \eta_0$,
     \end{itemize}
      where $\widetilde{\gamma(t)}$ denotes the equivalence class of $\gamma(t)$ under \eqref{eq:equivalencerelationdefinition}.
\end{lemma}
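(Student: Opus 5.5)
The idea is to perturb the single tame ray $\tilde x$ and show that each of the three listed properties is open along $\p X$ near $x_0$, using the continuity of $\nabla u$ on $\overline X$ from hypothesis (c). Since $x_0 \notin V$, hypothesis (b) gives a smooth unit-speed parametrization $\gamma$ of $\p X$ near $x_0$ with $\gamma(0)=x_0$, and $\mathbf n(\gamma(t))$ depends continuously on $t$.

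\emph{Third property.} Because $\nabla u \in C(\overline X)$ and $\mathbf n\circ\gamma$ is continuous, the function $t \mapsto (\nabla u(\gamma(t))-\gamma(t))\cdot \mathbf n$ is continuous. By \eqref{eq:neumanncondition} it equals some $2\eta_0>0$ at $t=0$. Hence it stays at least $\eta_0$ on $[-\epsilon,\epsilon]$ for $\epsilon$ small.

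\emph{First property.} We have $x\in X$, and $\tilde x$ is a segment from $x_0$ through $x$. So $\diam \tilde x \ge |x-x_0| =: 2r_0>0$. We need the leaves $\widetilde{\gamma(t)}$ to stay long, and we argue by contradiction.

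Suppose $t_k\to 0$ and $\diam \widetilde{\gamma(t_k)} \le r_0$. Let $p=\nabla u(x_0)$ and $p_k=\nabla u(\gamma(t_k))\to p$. Each leaf $\widetilde{\gamma(t_k)}$ is the contact set of the supporting affine function $\ell_k(y)=u(\gamma(t_k))+p_k\cdot(y-\gamma(t_k))$. These affine functions converge uniformly to the support plane $\ell$ of $u$ at $x_0$. Convexity then gives that the contact sets satisfy $\limsup_k \widetilde{\gamma(t_k)} \subset \tilde x$ in the Hausdorff sense. This is upper semicontinuity only, which is the wrong direction for a lower bound.

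To get the reverse inequality I would use the structure from \cite{McCannRankinZhang24+}*{Theorem 1.1}. The point $x$ lies on $\p\Omega_2$, and the interior points of $\tilde x$ near $x_0$ are in $X_1^+$. I would use that rays cannot cross, together with the Euler--Lagrange description of $u$ on $\Omega_1$. In that description, $\Delta u$ along a ray is governed by an ODE whose data at $\gamma(t)$ involve $(\nabla u-\gamma)\cdot\mathbf n$, which is bounded below by $\eta_0$. This ODE keeps the ray from terminating until a definite distance, which gives lower semicontinuity of the length. Concretely, if the leaves $\widetilde{\gamma(t_k)}$ were short, their far endpoints would lie in $\Omega_2\cup\Omega_0$ near $x_0$ or near a point of $\tilde x$ at distance at most $r_0$ from $x_0$. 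That would contradict $x_0$ lying on a ray of length $2r_0$ whose initial portion is an interior bunching segment, since the strict convexity of $u$ in $\Omega_2$ forces a jump of $\nabla u$ across $\tilde x$.

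This lower semicontinuity of the leaf length is the main obstacle. I would first try to use the non-crossing of rays together with the monotone, continuous dependence of the ray direction on $t$. Rays emanating from an arc of $\p X$ sweep a region, and a short ray adjacent to long rays on both sides would leave a gap. That gap would have to be filled by $\Omega_2$ with a corner, incompatible with $\Delta u=3$ and $C^{1}$ regularity. I expect this is essentially the argument of \cite{McCannRankinZhang24+}*{Lemma 6.1}, and I would quote it for this step.

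\emph{Second property.} Given the first property, suppose $\widetilde{\gamma(t)}$ met $\p X$ at a second point $z_t$. The rays $\widetilde{\gamma(t)}$ converge into $\tilde x$, so $z_t$ would converge to a point of $\overline{\tilde x}\cap\p X$ different from $x_0$, or to $x_0$ itself. The first case contradicts $\tilde x\cap\p X=\{x_0\}$. The second case is excluded because the length is bounded below by $r_0$ and, by convexity of $X$, the segment from $\gamma(t)$ enters $X$ transversally. This transversality holds because the ray direction is close to that of $\tilde x$, which is transverse at $x_0$ by convexity and $x\in X$. Finally, shrinking $\epsilon$ makes all three properties hold simultaneously.
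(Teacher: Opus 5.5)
Your proposal is correct and essentially the paper's route. The paper also takes the lower bound on $\diam(\widetilde{\gamma(t)})$ and the single boundary intersection directly from \cite{McCannRankinZhang24+}*{Lemma 6.1}, and gets $\eta_0$ from continuity of $\nabla u$, smoothness of $\p X$ near $x_0$, and the strict inequality \eqref{eq:neumanncondition}. Your separate derivation of the second property from the first, via upper semicontinuity of contact sets and convexity of $X$, is a valid alternative to quoting it. However, your heuristic sketches for the length bound are not sound: a ``jump of $\nabla u$ across $\tilde x$'' is impossible since $u \in C^1(\overline X)$, and the gap-filling claim is unjustified. Replace them by the citation, as you ultimately do.
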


\begin{proof}
    By \cite{McCannRankinZhang24+}*{Lemma 6.1}, there is some $\epsilon > 0$, $r_0>0$, and smooth unit-speed curve $\gamma: [-\epsilon, \epsilon] \to \partial X$ such that $\diam(\widetilde{\gamma(t)}) \geq  r_0$ and $\widetilde{\gamma(t)} \cap \partial X = \{\gamma(t)\}$ for each $t \in [-\epsilon, \epsilon]$. 
    Moreover, since $u \in C^1(\Omega_1^+)$ by hypothesis (c) of Theorem \ref{thm:mainresults},
    $\partial X$ is smooth in a neighbourhood of $x_0$, and $(\nabla u(x_0) - x_0)\cdot \mathbf{n} > 0$ by the definition of tame \eqref{eq:neumanncondition}, if we take $\epsilon$ small enough then we may find some constant $\eta_0 > 0$ such that
   \[
   (\nabla u(\gamma(t)) - \gamma(t))\cdot \mathbf{n} \geq \eta_0
   \]
    for all $t \in [-\epsilon, \epsilon]$.
\end{proof}
From \cite{McCannRankinZhang24+}*{Section 6}, we see that there is a natural choice of coordinates for the local foliation in Lemma \ref{lemma:localfoliationabouttamerays}.
\begin{remark}[Construction of $(r,t)$ coordinates, \cite{McCannRankinZhang24+}]\label{remark:constructionof(r,t)coordinates}
     Take $\gamma(t)$ as in Lemma \ref{lemma:localfoliationabouttamerays}. For each $t \in [-\epsilon, \epsilon]$, define $\xi(t):= (\xi^1(t), \xi^2(t))$ to be the unit vector parallel to the leaf $\widetilde{\gamma(t)}$, pointing into $X$, and define $R(t) := \diam(\widetilde{\gamma(t)})$. By \cite[Theorem 7.3]{McCannRankinZhang24+}, $R(t)$ is continuous. Define 
     \[\mathcal{N} :=\bigcup_{t \in [-\epsilon, \epsilon]} \widetilde{\gamma(t)} \subset \Omega_1\]
     Then, for each $t$,
    \[
    \widetilde{\gamma(t)} = \{\gamma(t) + r\xi(t) \ | \ 0 \leq r \leq R(t)\},
    \]
    so we may parameterize $\mathcal{N}$ by 
    \begin{equation}\label{eq:ndefinition}
        \mathcal{N}= \{x(r,t) := \gamma(t) + r \xi(t) \ | \ t \in [-\epsilon,\epsilon], r \in [0, R(t)]\}.
    \end{equation}
    Following \cite{McCannRankinZhang24+}*{Section 7.1},  we use the preceding formula to
    extend $u|_{\Omega_1}$ to the domain 
    \begin{equation}\label{eq:nextdefinition}
    \mathcal{N}_{\mathrm{ext}} := \{x(r, t) = \gamma(t) + r\xi(t) \ | \ t \in [-\epsilon,\epsilon] , 0 \leq r < + \infty\}.
    \end{equation}
    By \cite{McCannRankinZhang24+}*{Remark 6.4}, the rays $\widetilde{\gamma(t)}$ spread out as they move away from the boundary, meaning that $(r, t) \mapsto x(r,t)$ is injective on $[-\epsilon, \epsilon]\times[0, \infty)$, thus these coordinates remain well-defined on the extension $\mathcal{N}_{\mathrm{ext}}$.
    
\end{remark}

Now, we are ready to study the free boundary $\Gamma$ using the obstacle problem. 
Throughout, given a domain $U \subset \mathbf{R}^n$ and a nonnegative function $v \in C^{1,1}(U)$ we define its \textit{contact set} 
 \begin{equation}\label{contact set}
     \Lambda(v) := \{x \in U \ | \ v(x) = 0\},
  \end{equation} 
its \textit{noncontact set }
\begin{equation}\label{noncontact set}
\Omega(v) := \{x \in U \ | \ v(x) > 0\},
\end{equation}
and its \textit{free boundary}
\begin{equation}\label{eq:free boundary}
F(v) := \Lambda(v) \cap \overline{\Omega(v)}. 
\end{equation}

\begin{lemma}[Reduction to the obstacle problem]\label{lemma:reductiontotheobstacleproblem}
      Suppose that $X \subset \mathbf{R}^2$ is a convex set satisfying hypotheses (a) and (c) of Theorem \ref{thm:mainresults}.    Let $x \in \Gamma$ be a tame free boundary point with $\{x_0\}= \tilde{x} \cap \partial X$. Take $r_0$ as in Lemma \ref{lemma:localfoliationabouttamerays} and the coordinates from Remark \ref{remark:constructionof(r,t)coordinates}, and define \begin{equation}\label{eq:rmaxdefn}
        R_{\mathrm{max}} := \max_{t \in [-\epsilon, \epsilon]}R(t).
    \end{equation}
    Let $\overline{R}_{\mathrm{max}} > R_{\mathrm{max}}$, and define \begin{equation}\label{eq:Udefn}
        U_{\epsilon} = U := \{x(r,t) \ | \ t \in (-\epsilon, \epsilon), r_0 < r < \overline{R}_{\mathrm{max}}\},
      \end{equation}
where, if necessary, a smaller choice of $\overline{R}_{\mathrm{max}} > R_{\mathrm{max}}$ ensures $\overline{U} \setminus \Omega_1 \subset \Omega_2$ and $U \Subset X$.
    Let $u_1$ be the (explicitly constructed) minimal convex extension of $u|_{\mathcal{N}}$ to $\mathcal{N}_{\mathrm{ext}}$. Then, $u_1 \in C^{1,1}(\overline{U})$, and letting $v:= u - u_1$, we have
    \begin{itemize}
        \item $\Lambda(v) = \overline{U} \cap \Omega_1$,
        \item $F(v) = U \cap \Gamma$,
        \item $v \in C^{1,1}(\overline{U})$,
        \item  $v \geq 0$, and
        \item$\Delta v \geq c_0 > 0$ holds on $\Omega(v):=\{v>0\}$ for some $c_0$.
    \end{itemize}
\end{lemma}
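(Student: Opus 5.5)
We construct $u_1$ explicitly and then read off the five properties from the geometry of the foliation. Along each ray $\widetilde{\gamma(t)}$ the function $u$ is affine with gradient $p(t):=\nabla u(\gamma(t))$. So for $x(r,t)\in\mathcal N_{\mathrm{ext}}$ we set
\[
u_1(x(r,t)) := u(\gamma(t)) + r\,p(t)\cdot\xi(t),
\]
which is the affine function $\ell_t(y)=u(\gamma(t))+p(t)\cdot(y-\gamma(t))$ evaluated along the extended ray. By construction $u_1=u$ on $\mathcal N$ and $\nabla u_1 = p(t)$ is constant on each extended ray.

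\textbf{Convexity and minimality.} We check convexity by showing $u_1=\sup_{s\in[-\epsilon,\epsilon]}\ell_s$ on $\mathcal N_{\mathrm{ext}}$. Each $\ell_s$ supports $u$ along $\widetilde{\gamma(s)}$, since $u$ is convex and $p(s)=\nabla u$ there. The envelope identity then follows from the ray-consistency relation $\dot p(t)\cdot\xi(t)=0$, which holds wherever $p$ is differentiable and which we take from the analysis in \cite{McCannRankinZhang24+}*{Sections 6--7}. This identity makes $\ell_t$ the maximizing plane along the whole extended ray. Minimality is then immediate, because any convex extension of $u|_{\mathcal N}$ must lie above each supporting plane $\ell_s$.

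\textbf{$C^{1,1}$ regularity.} This is the main obstacle. Since $\nabla u_1(x(r,t))=p(t)$, it suffices to show two things:
\begin{itemize}
\item $t\mapsto p(t)$ is Lipschitz on $[-\epsilon,\epsilon]$;
\item the map $(r,t)\mapsto x(r,t)$ is bi-Lipschitz on $[r_0,\overline R_{\max}]\times[-\epsilon,\epsilon]$.
\end{itemize}
For the first, we use hypothesis (c): $u\in C^{1,1}_{\mathrm{loc}}(\Omega_1^+)$, and $p(t)=\nabla u(x(r_0,t))$ with $x(r_0,t)$ in the interior region foliated by tame rays. For the second, we use the non-crossing of rays from \cite{McCannRankinZhang24+}*{Remark 6.4}, which says rays spread as $r$ increases. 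That gives a lower bound on the Jacobian $\partial_r x\wedge\partial_t x = \xi\wedge(\dot\gamma+r\dot\xi)$ for $r\ge r_0$, uniform after shrinking $\epsilon$; here the transversality $\xi\cdot\mathbf n\neq0$ comes from $\eta_0>0$. One further point must be checked: $\xi(t)$ itself is Lipschitz. We get this from $\xi\perp\dot p$ and the Lipschitz bound on $p$ where $\dot p\neq0$; otherwise we would cite \cite{McCannRankinZhang24+}*{Section 7.1}. Together these give $\nabla u_1\in C^{0,1}(\overline U)$.

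\textbf{Remaining properties.} Since $u\in C^{1,1}_{\mathrm{loc}}(X)$ by \cite{McCannRankinZhang25}, $v=u-u_1$ lies in $C^{1,1}(\overline U)$. The sign $v\ge0$ follows because $u\ge\ell_s$ for every $s$, so $u\ge\sup_s\ell_s=u_1$.

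For the contact set:
\begin{itemize}
\item On $\overline U\cap\Omega_1$ we have $v=0$.
\item Conversely, let $y\in\overline U\setminus\Omega_1\subset\Omega_2$ and suppose $v(y)=0$. Then $u(y)=\ell_t(y)$ for the ray through $y$. Since $u\ge\ell_t$ with equality on the original ray, $u$ agrees with $\ell_t$ on the convex hull of the ray and $y$. That places $y\in\widetilde{\gamma(t)}$, a contradiction.
\end{itemize}
Hence $\Lambda(v)=\overline U\cap\Omega_1$. Then $F(v)=U\cap\partial\Omega_1\cap\partial\Omega_2=U\cap\Gamma$, using that $U\Subset X$ meets only $\Omega_1\cup\Omega_2$.

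Finally, on $\{v>0\}\subset\Omega_2$ we have $\Delta v = 3-\Delta u_1$. It remains to show $\Delta u_1\le 3-c_0$. Computing in the $(r,t)$ coordinates,
\[
\Delta u_1=\frac{\dot p\cdot\xi^\perp}{|\xi\wedge(\dot\gamma+r\dot\xi)|},
\]
which is decreasing in $r$ because rays spread. This reduces the bound to showing $\Delta u_1\le 3-c_0$ on the actual bunching rays. There we would use the Euler--Lagrange identity on $\Omega_1^+$ from \cite{McCannRankinZhang24+}*{Lemma 7.3}, which expresses the mass balance along rays, together with the strict Neumann inequality $\eta_0>0$ to obtain the strict gap $c_0$. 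I expect this last estimate, together with the Lipschitz control of $\xi$, to require the most care.
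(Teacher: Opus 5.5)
Your construction of $u_1$ (it is the paper's $b(t)+rm(t)$), the sign $v\ge 0$, and the identification $\Lambda(v)=\overline U\cap\Omega_1$ are fine, and in places more explicit than the paper. (In that identification, equality $u=\ell_t$ at an extended point forces the whole segment into $\widetilde{\gamma(t)}$.) The same holds for the $C^{1,1}$ bound via $\nabla u_1=p(t)$ and the bi-Lipschitz $(r,t)$ chart.

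The genuine gap is the last bullet, $\Delta v\ge c_0>0$ on $\{v>0\}$. This is the substantive content of the lemma, and you only announce it. First, your reduction is misstated. The bound $\Delta u_1\le 3-c_0$ cannot hold on the bunching rays themselves, because along each tame ray $\Delta u-3$ changes sign: by the formula below it is positive for $r<2R(t)/3$. Using your monotonicity in $r$, you only need the bound at the tip $r=R(t)$. Second, the input you name does not give it there. Mass balance along the ray alone reads
\[
R\,\dot p\cdot\xi^\perp-3R\,\xi\times\dot\gamma-\tfrac32R^2|\dot\xi|=|\dot\gamma|\,(\nabla u-x)\cdot\mathbf n .
\]
Even with $\eta_0>0$, this leaves the sign of $3-\Delta u_1$ at $r=R(t)$ undetermined, since that sign depends on how $\tfrac32R^2|\dot\xi|$ compares with the boundary flux.

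The missing ingredient is the first-moment (barycenter) balance along each ray. It pins down the numerator of your formula:
\[
\dot p\cdot\xi^\perp=3\,\xi\times\dot\gamma+2R|\dot\xi| .
\]
Inserted into the mass balance, it gives $R^2|\dot\xi|=2|\dot\gamma|(\nabla u-x)\cdot\mathbf n$. These are equivalent to the two identities the paper takes from Lemma 6.3 and equation (66) of \cite{McCannRankinZhang24+}. Writing $a:=\xi\times\dot\gamma/|\dot\xi|$, they give, for $r\ge R(t)\ge r_0$,
\[
3-\Delta u_1=\frac{3r-2R(t)}{r+a}\ \ge\ \frac{r}{r+a}\ \ge\ \frac{r_0}{r_0+a_{\max}}=:c_0 .
\]
Here $a_{\max}\le R_{\max}^2/(2\eta_0)<\infty$, because the ray-length identity with $|\dot\gamma|=1$ and $(\nabla u-x)\cdot\mathbf n\ge\eta_0$ bounds $|\dot\xi|$ from below. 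That is how the strict Neumann inequality produces the gap.

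Two smaller points. First, $\xi\perp\dot p$ with $p$ merely Lipschitz yields no Lipschitz bound on $\xi$, since $\dot p$ is only bounded; you do need to cite Lemma 6.5 of \cite{McCannRankinZhang24+}, as the paper does. Second, the relation $\dot p\cdot\xi=0$ only makes $s=t$ a critical point of $s\mapsto\ell_s(x(r,t))$. Maximality also requires $\dot p\cdot\xi^\perp\ge 0$ and the non-crossing of the extended rays.
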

\begin{proof}
    Take the coordinates $(r,t)$ for $\mathcal{N}_{\mathrm{ext}}$ from Remark \ref{remark:constructionof(r,t)coordinates}. 
    Since each ray $\widetilde{\gamma(t)}$ is by definition a 1-dimensional set on which $u$ is in contact with its supporting hyperplanes, there exist functions $b, m : [-\epsilon,\epsilon] \to \mathbf{R}$ such that 
    \[
    u(x(r,t))= b(t) + rm(t)
    \]
    for $(r,t) \in[0, R(t)]\times[-\epsilon,\epsilon]$.
    Following \cite{McCannRankinZhang24+}*{Section 7.1}, we see that on $\mathcal{N}_{\mathrm{ext}}$, the minimal convex extension of $u|_{\mathcal{N}}$ is given by the formula
\begin{equation}\label{eq:convexextensiondef}
        u_1(x(r,t)) = b(t) + rm(t).
    \end{equation}
    By \cite{McCannRankinZhang24+}*{Lemma 6.3}, 
for a.e. $(r,t)$
    we have 
    \begin{equation}\label{Laplacian u1}
        3 - \Delta u_1 = \frac{3r - 2R(t)}{r + \frac{\xi(t) \times \dot{\gamma}(t)}{|\dot{\xi}(t)|}},
     \end{equation}
         where $\xi \times \dot{\gamma} = \xi^1 \dot{\gamma}^2 - \xi^2 \dot{\gamma}^1$. Note that $\xi(t) \times \dot{\gamma}(t) > 0$ by \cite{McCannRankinZhang24+}*{(64)}. Moreover, by \cite{McCannRankinZhang24+}*{(66)},
     \begin{equation}\label{Ray length}
         R(t)^2 |\dot{\xi}(t)|  = 2|\dot{\gamma}(t)| (\nabla u - x)\cdot \mathbf{n}.
    \end{equation}
    In particular, since $R(t) \geq r_0$ and $(\nabla u - x)\cdot \mathbf{n} \geq \eta_0$ for $t \in [- \epsilon, \epsilon]$, we see that $|\dot{\xi}(t)|$ is bounded below for $t \in [- \epsilon, \epsilon]$. So, on $\mathcal{N}_{\mathrm{ext}} \setminus \Omega_1$, we have 
    \begin{equation}\label{eq:obstaclelaplacianbound}
        \Delta u_1 \leq 3 - c_0,
    \end{equation}
    for some $c_0 > 0$ depending on $r_0$ and $\eta_0$. 

   Now, define $v:= u - u_1$. By definition, $u = u_1$ on $\overline{U} \cap \Omega_1$. Moreover, by the strict convexity of $u$ on $\overline{U} \setminus \Omega_1$ (since $\overline{U} \setminus \Omega_1 \subset \Omega_2$) and the fact that $u_1$ is the minimal convex extension of $u|_{\overline{U} \cap \Omega_1}$, we see that $u > u_1$ on $U \cap \Omega_2$. Therefore, 
   \[
   F(v) = U \cap \Gamma,
   \]
   and $v \geq 0$ on $U$. 
   
   By \cite{McCannRankinZhang25}*{Theorem 3}, we know that $u \in C^{1,1}(\overline{U})$. Moreover, by the fact that $u_1 \in C^{1,1}(\overline{U} \cap \Omega_1)$ and the expression for $u_1$ \eqref{eq:convexextensiondef}, it is clear that $u_1 \in C^{1,1}(\overline{U})$, thus $v \in C^{1,1}(\overline{U})$. Now, notice that 
    \[
    \Delta v = \Delta u - \Delta u_1 = 3 - \Delta u_1
    \]
    on $\Omega(v)$. By \eqref{eq:obstaclelaplacianbound}, this implies that
    \[
    \Delta v \geq c_0 >0
    \]
    on $\Omega(v)$.     
\end{proof}

    \subsection{Regularity of the obstacle}

    We now show that the obstacle $u_1$ gains two degrees of regularity compared to the continuous 
    function $R$ defined in Remark \ref{remark:constructionof(r,t)coordinates}. In particular, by the continuity of $R$, 
    $u_1 \in C^{2}(\overline{U})$.
    
\begin{remark}[The Heine-Cantor uniform continuity theorem]\label{R:Heine-Cantor}
    Recall that any function which is continuous on a compact set is uniformly continuous, and thus possesses a uniform modulus of continuity, by the Heine-Cantor theorem \cite{Engelking89}*{Theorem 4.3.32}.
    \end{remark}
    
    \begin{proposition}[Obstacle gains two degrees of regularity compared to $R(t)$]\label{lemma:C^2obstacle}
          Suppose that $X \subset \mathbf{R}^2$ is 
          a convex set satisfying hypotheses (a) and (c) of Theorem \ref{thm:mainresults}. Let $x \in \Gamma$ be a tame free boundary point. 
          %Suppose $\partial X$ is smooth in a neighbourhood of $\{x_0\}= \tilde{x} \cap \partial X$. 
          Let $u_1$ be the 
        minimal convex extension \eqref{eq:convexextensiondef}
        of $u|_{\mathcal{N}}$ to $U$, where $\mathcal{N}$ and $U$ are defined in \eqref{eq:ndefinition} and \eqref{eq:Udefn} respectively. Suppose that $R$ has modulus of continuity $\sigma$. Then, $D^2u_1$ has modulus of continuity $\rho(t)= C\max\{\sigma(t), t\}$ for some constant $C$. In particular, $u_1 \in C^2(\overline{U})$.
    \end{proposition}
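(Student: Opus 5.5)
We plan to compute $D^2u_1$ explicitly in the $(r,t)$ coordinates of Remark~\ref{remark:constructionof(r,t)coordinates}. The key point is that its entries are expressible in terms of $R(t)$ and smooth (or at least Lipschitz) data $\gamma,\xi,\dot\gamma,\dot\xi$, and $r$.

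First, since $u$ is $C^1$ and affine along each ray, $\nabla u_1(x(r,t))=\nabla u(\gamma(t))=:p(t)$ is independent of $r$. Hence $\nabla u_1$ is constant on rays, and $D^2u_1$ annihilates $\xi(t)$. Writing $\nu(t)=\xi(t)^\perp$, we get
\[
D^2u_1(x(r,t))=\lambda(r,t)\,\nu(t)\otimes\nu(t),\qquad \lambda=\Delta u_1 .
\]
Formula \eqref{Laplacian u1} then gives
\[
\lambda(r,t)=3-\frac{3r-2R(t)}{r+a(t)},\qquad a(t):=\frac{\xi(t)\times\dot\gamma(t)}{|\dot\xi(t)|}>0 .
\]
Equivalently, $\lambda=\dot p\cdot\nu/(\partial_t x\cdot\nu)$ with $\partial_t x\cdot\nu=\dot\gamma\cdot\nu+r\dot\xi\cdot\nu$. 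We should first upgrade \eqref{Laplacian u1} from a.e.\ to everywhere. This follows once we know the right-hand side is continuous and $u_1\in C^{1,1}$ (Lemma~\ref{lemma:reductiontotheobstacleproblem}), since a $C^{1,1}$ function whose a.e.\ Hessian agrees with a continuous matrix field is $C^2$ with that Hessian. We can see this by mollifying, or by integrating $\nabla u_1$ along segments.

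Next we verify that $a$, $\xi$ and $\nu$ are regular in $t$ on $[-\epsilon,\epsilon]$. By \eqref{Ray length},
\[
|\dot\xi(t)|=\frac{2|\dot\gamma(t)|\,(\nabla u(\gamma(t))-\gamma(t))\cdot\mathbf n}{R(t)^2}.
\]
Here $\gamma$ is smooth, $R\ge r_0$, and $t\mapsto\nabla u(\gamma(t))$ is Lipschitz because $u\in C^{1,1}$ near the boundary arc (hypothesis (c) together with \cite{McCannRankinZhang25}). So $|\dot\xi|$ has modulus $C\max\{\sigma(t),t\}$ and is bounded above and below. In particular $\xi$ is $C^1$, hence $\nu$ is Lipschitz, and $a$ has modulus $C\max\{\sigma,t\}$ and is bounded below by a positive constant. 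We take the sign of $\dot\xi$ to be fixed, since rays spread out (Remark 6.4 of \cite{McCannRankinZhang24+}).

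Then $\lambda(r,t)$ is a rational function of $(r,R(t),a(t))$ whose denominator $r+a\ge r_0$ is bounded below on $\overline U$. It is therefore Lipschitz in $r$ and has modulus $C\max\{\sigma,\cdot\}$ in $t$. Combined with the Lipschitz continuity of $\nu\otimes\nu$, $(r,t)\mapsto D^2u_1$ has modulus $C\max\{\sigma(s),s\}$.

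Finally, we transfer this modulus to Euclidean coordinates. The map $(r,t)\mapsto x(r,t)$ is $C^1$ with Jacobian $\partial_t x\times\xi$ bounded away from zero on $\overline U$, using $r\ge r_0$, $\xi\times\dot\gamma>0$ and the outward spreading, and it is injective by Remark~\ref{remark:constructionof(r,t)coordinates}. Its inverse is therefore Lipschitz on $\overline U$, which preserves the modulus up to a constant. The main obstacle is this last step together with the a.e.-to-everywhere upgrade. We must ensure the Jacobian does not degenerate uniformly on $\overline U$, possibly after shrinking $\epsilon$, and that $R$ being merely continuous, via Heine--Cantor (Remark~\ref{R:Heine-Cantor}), suffices for the existence of $\sigma$ and hence $u_1\in C^2(\overline U)$.
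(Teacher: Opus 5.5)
Your proposal is correct and follows essentially the same route as the paper's proof. Both arguments use the rank-one form $D^2u_1=\Delta u_1\,\nu\otimes\nu$ (the paper's $P^{-1}\mathrm{diag}(\Delta u_1,0)P$), read off the modulus of $\Delta u_1$ from \eqref{Laplacian u1} and \eqref{Ray length} using $R\ge r_0$ and the lower bound $\eta_0$, use Lipschitz eigenvector fields, transfer to Cartesian coordinates through the bi-Lipschitz $(r,t)$ change of variables, and obtain $\sigma$ from Heine--Cantor. Your extra care on upgrading the a.e.\ formula to an everywhere statement, and your direct Jacobian argument for the bi-Lipschitz property (where the paper simply cites \cite{McCannRankinZhang24+}*{Corollary 6.6}), are refinements of that proof rather than a different approach.
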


     \begin{proof}
         Recall the Laplacian of $u_1$ is given \eqref{Laplacian u1} by
        \[
        \Delta u_1(x(r,t)) = 3 - \frac{3r - 2R(t)}{r + \frac{\xi(t) \times \dot \gamma(t)}{|\dot{\xi}(t)|}}.
        \]
        By \cite{McCannRankinZhang24+}*{Lemma 6.5}, $\xi(t)$ is Lipschitz. Moreover, from \eqref{Ray length}, 
        we have that
        \[
        |\dot{\xi}(t)| = \frac{2|\dot{\gamma}(t)|(\nabla u(\gamma(t)) - \gamma(t))\cdot \mathbf{n}}{R(t)^2}. 
        \]
        Since $u \in C^{1,1}(\overline{U})$ by Lemma \ref{lemma:reductiontotheobstacleproblem}, the fact that $R(t) \geq r_0 >0$ implies that $|\dot{\xi}|$ has modulus of continuity bounded by $C \max\{\sigma(t), t\}$ for some constant $C$; moreover, since $(\nabla u(\gamma(t)) - \gamma(t))\cdot \mathbf{n} \geq \eta_0 >0$ on $(- \epsilon, \epsilon)$, we have $ |\dot{\xi}(t)|$ bounded above and below as well. So, $\Delta u_1$ has modulus of continuity bounded by $C \max\{\sigma(t), t\}$ in $(r,t)$ coordinates, and so since the coordinate change between $(r,t)$ coordinates and Cartesian coordinates is bi-Lipschitz by \cite{McCannRankinZhang24+}*{Corollary 6.6}, we conclude that $\Delta u_1$ has modulus of continuity bounded by $C \max\{\sigma(t), t\}$ in Cartesian coordinates, for some possibly different constant $C$. 

         By the expression \eqref{eq:convexextensiondef}, it is clear that $\partial_{rr}^2 u_1 = 0$, since $u_1$ is affine in the $r$ direction. So, the Hessian of $u_1$ in Cartesian coordinates has a null eigenvalue, whose corresponding eigenvector is the ray with direction $\xi$. Since the eigenvectors of $D^2 u_1$ are orthogonal, we see that $\frac{\dot{\xi}}{|\dot{\xi}|}$ is also an eigenvector of $D^2 u_1$, on account of it being orthogonal to $\xi$ (since $|\xi(t)| = 1$ for each $t$). Therefore, we can write the Hessian of $u_1$ as \begin{equation}\label{eq:hessianexpression}
             D^2 u_1 = P^{-1}\mathrm{diag}(\Delta u_1, 0)P,
         \end{equation}
         where $P$ is the orthogonal matrix whose rows are the eigenvectors $\xi$ and $\frac{\dot{\xi}}{|\dot{\xi}|}$ of $D^2 u_1$. 

         We claim that $\xi$ and $\frac{\dot{\xi}}{|\dot{\xi}|}$ are Lipschitz in Cartesian coordinates. Indeed, $\xi(t)$ is Lipschitz \cite{McCannRankinZhang24+}*{Lemma 6.5}, and so since the change of coordinates between $(r,t)$ coordinates and Cartesian coordinates is bi-Lipschitz \cite{McCannRankinZhang24+}, $\xi$ is Lipschitz in Cartesian coordinates as well. Moreover, we claim $\frac{\dot{\xi}(t)}{|\dot{\xi}(t)|}$ is Lipschitz. Since $\xi(t)$ is Lipschitz, let $\zeta$ denote the (Lipschitz) normal vector to $\xi$ satisfying $\xi \times \zeta >0$. Since $|\xi(t)| = 1$ for each $t$, we see that $\frac{\dot{\xi}(t)}{|\dot{\xi}(t)|} = \pm \zeta(t)$ at every $t$. But from \cite{McCannRankinZhang24+}*{Remark 6.4}, we know that $\xi \times \dot{\xi} > 0$, therefore $\frac{\dot{\xi}(t)}{|\dot{\xi}(t)|} =  \zeta(t)$ is Lipschitz in $(r,t)$ coordinates, and hence in Cartesian coordinates as well. 

         Therefore, since $\xi$, $\frac{\dot{\xi}}{|\dot{\xi}|}$, and $\Delta u_1$ have modulus of continuity bounded by $C\max\{\sigma(t), t\}$ in Cartesian coordinates, we conclude by \eqref{eq:hessianexpression} that $D^2 u_1$ has modulus of continuity bounded by $C\max\{\sigma(t), t\}$ for some constant $C$ on $\overline{U}$. In particular,
         $R(t)$ is continuous 
         by Remark \ref{remark:constructionof(r,t)coordinates}, and thus uniformly continuous on the compact set $[-\epsilon, \epsilon]$ by the Heine-Cantor Theorem of Remark \ref{R:Heine-Cantor}. So, $u_1 \in C^{2}(\overline{U})$. 
    \end{proof}

\section{The low-regularity obstacle problem}\label{sec:low-regularityobstacleproblem}

Having reduced the problem of studying the tame free boundary to an obstacle problem, we now wish to study the obstacle problem 
\begin{equation}\label{eq:obstacleproblem}
    \Delta v = f\characteristic{\{v > 0\}}
\end{equation}
in the case that $f$ is only continuous. Recall that given a domain $U \subset \mathbf{R}^n$ and a nonnegative function $v \in C^{1,1}(U)$, we define its \textit{contact set} 
{
$\Lambda(v) =v^{-1}\{0\}$, 
\textit{noncontact set} $\Omega(v)= v^{-1}((0,\infty))$
and 
\textit{free boundary}
$F(v) := \Lambda(v) \cap \overline{\Omega(v)}$
as in \eqref{contact set}--\eqref{noncontact set}.
}

In his pioneering work on the obstacle problem (see \cite{Caffarelli98}), Caffarelli showed that when $f \in C^{\alpha}$, the free boundary of a solution to \eqref{eq:obstacleproblem} can be divided into a set of \textit{regular points}, near which the free boundary is $C^{1,\alpha}$, and a set of \textit{singular points}. Moreover, when $f \in C^{1,\alpha}$, he showed that the singular points are in fact contained in a $C^{1}$ submanifold. Blank \cite{Blank01} later showed that if $f$ is Dini continuous, then the free boundary is $C^{1}$-smooth near regular points. Moreover, Blank's initial regularity result is sharp; Blank gives an example \cite{Blank01}*{Theorem 7.3} in which $f$ is continuous, but the free boundary spirals infinitely upon itself and is not even locally the graph of a function. Nonetheless, Blank still shows that there exists a dichotomy between regular and singular points on the free boundary, which will be the starting point for our investigation into the monopolist's free boundary:
\begin{theorem}[Caffarelli alternative for $C^{2}$ obstacles \cite{Blank01}*{Theorem 4.3}]\label{theorem:blankcaffarellialternative} Let $U \subset \mathbf{R}^n$ be a domain and $c_0>0$. 
    Let $v \in C^{1,1}(\overline{U})$ be a nonnegative function such that $\Delta v =  f\characteristic{\{v > 0\}}$ for some $f \in C(\overline{U})$ satisfying $f \geq c_0 > 0$ on $\overline{U}$. Then, for any point $x_0 \in F(v) \cap U$, either:
    \begin{enumerate}[label=(\alph*)]
        \item\label{thm:defsingularpoint} $x_0$ is a \textit{singular point} of the free boundary, and
        \[
        \lim_{r \to 0}\frac{|\Lambda(v) \cap B_{r}(x_0)|}{|B_r|} = 0,\ \mbox{\rm or}
        \] 
        \item\label{thm:defregularpoint} $x_0$ is a \textit{regular point} of the free boundary, meaning
        \[
        \lim_{r \to 0}\frac{|\Lambda(v) \cap B_{r}(x_0)|}{|B_r|} = \frac{1}{2}.
        \]
    \end{enumerate}
\end{theorem}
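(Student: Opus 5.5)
The plan follows Caffarelli's blow-up argument, with the $C^\alpha$ continuity of $f$ replaced everywhere by mere continuity. Fix $x_0\in F(v)\cap U$ and translate so that $x_0=0$. For small $r$ consider the quadratic rescalings $v_r(y):=v(ry)/r^2$ on $B_1$ (or on larger balls for small $r$).

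First I would establish the two standard estimates, which need only $f\ge c_0$ and $v\in C^{1,1}$.
\begin{enumerate}[label=(\roman*)]
\item \emph{Optimal growth:} $\sup_{B_r}v\le Cr^2$. This follows directly from $v\in C^{1,1}$ together with $v(0)=|\nabla v(0)|=0$.
\item \emph{Nondegeneracy:} $\sup_{B_r}v\ge cr^2$. Take $w=v-\frac{c_0}{2n}|y-y_0|^2$ with $y_0\in\Omega(v)$ close to $0$. It is subharmonic on $\Omega(v)\cap B_r(y_0)$, it is positive at $y_0$, and it is negative on $\partial\Omega(v)$, so its maximum is attained on $\partial B_r(y_0)$.
\end{enumerate}
These estimates make the family $\{v_r\}$ precompact in $C^{1,\beta}_{\mathrm{loc}}$ and uniformly bounded in $C^{1,1}$. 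Since $f$ is continuous, $\Delta v_r=f(ry)\characteristic{\{v_r>0\}}$, and $f(ry)\to f(0)=:a\ge c_0$ uniformly. Nondegeneracy also gives local Hausdorff and measure convergence of the contact sets along subsequences: positivity sets do not collapse, and $|\partial\Omega(v)|=0$ by nondegeneracy and porosity. Consequently every blow-up limit $v_0$ is a global solution of $\Delta v_0=a\characteristic{\{v_0>0\}}$ with quadratic growth and $0\in F(v_0)$.

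Next I would show that each blow-up $v_0$ is convex and then classify it. Convexity is the step I expect to be the main obstacle. Caffarelli's proof of semiconvexity, $\partial_{ee}v\ge -C$ with the negative part vanishing under blow-up, differentiates the equation twice and therefore uses regularity of $f$. I would first try Blank's route: prove that $(\partial_{ee}v_r)^-\to 0$ in $L^\infty_{\mathrm{loc}}$. The idea is that on $\Omega(v_r)$ the second derivatives solve $\Delta(\partial_{ee}v_r)=\partial_{ee}(f(ry))$ only distributionally. Instead I would compare $v_r$ with the solution $\bar v_r$ of the constant-coefficient problem $\Delta \bar v_r=a\characteristic{\{\bar v_r>0\}}$ with the same boundary data on $B_1$. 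Since $\|f(r\cdot)-a\|_\infty\to 0$, the difference $v_r-\bar v_r\to 0$ uniformly, so every blow-up of $v$ is a limit of constant-coefficient solutions. Caffarelli's semiconvexity and classification apply to those, and the resulting properties pass to the limit.

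Once $v_0$ is known to be convex, Caffarelli's classification of global convex solutions with quadratic growth yields exactly two possibilities:
\begin{itemize}
\item a half-space solution $\frac a2((y\cdot e)^+)^2$, for which $|\Lambda(v_0)\cap B_1|=\frac12|B_1|$;
\item a polynomial solution $\frac12 y^TAy$ with $A\ge0$ and $\operatorname{tr}A=a$, for which $|\Lambda(v_0)\cap B_1|=0$.
\end{itemize}
Here I would use that $\{v_0=0\}$ is convex and that $v_0$ is a one-dimensional function when $\{v_0=0\}$ has interior. The case of a lower-dimensional contact set has measure zero.

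Finally I need to rule out mixing along different subsequences, so that the density limit exists rather than oscillating between $0$ and $\frac12$. I would use the monotone quantity given by the density of the contact set, combined with the fact that the admissible densities $\{0,\frac12\}$ form a discrete set. Since $r\mapsto |\Lambda(v)\cap B_r|/|B_r|$ is continuous, and every subsequential limit of $|\Lambda(v_r)\cap B_1|/|B_1|$ equals the density of $\Lambda(v_0)$ (by the measure convergence above), the set of limit points of this continuous function is connected. A connected subset of $\{0,\frac12\}$ is a single point, so the limit exists and equals either $0$ or $\frac12$, which gives alternatives \ref{thm:defsingularpoint} and \ref{thm:defregularpoint}.
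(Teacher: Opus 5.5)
There is a genuine gap, and it is in the classification of blow-ups, not in convexity. Note first that the paper gives no proof of this statement; it cites Blank's Theorem~4.3.

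Your final reduction is sound as far as it goes. The cluster set of the continuous function $r\mapsto|\Lambda(v)\cap B_r|/|B_r|$ is connected, and measure convergence identifies each cluster value with the density in $B_1$ of some blow-up. (That density is not a monotone quantity, but you only use connectedness.) Convexity is not the hard step either. Once $v_0$ solves $\Delta v_0=a\characteristic{\{v_0>0\}}$ on all of $\mathbf{R}^n$, Caffarelli's convexity of global solutions applies directly, exactly as in Proposition~\ref{prop:classificationofblow-ups}, so the comparison with $\bar v_r$ is unnecessary there.

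The problem is that the half-space/polynomial dichotomy is \emph{not} a classification of convex global solutions with quadratic growth and $0\in F(v_0)$. Two counterexamples:
\begin{itemize}
\item $v_0(y)=\frac a2(y_1)_+^2+\frac a2(-y_1-\tfrac12)_+^2$, whose contact set is the slab $\{-\tfrac12\le y_1\le 0\}$. It is one-dimensional, so your remark about one-dimensionality does not exclude it.
\item In the plane, $v_0(y)=\frac a4(|y-e_1|^2-1)-\frac a2\log|y-e_1|$ outside $B_1(e_1)$ and $0$ inside.
\end{itemize}
Both are convex global solutions whose contact sets have density in $B_1$ strictly between $0$ and $\tfrac12$.

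Classically such profiles are excluded in one of two ways. One is homogeneity of blow-ups, via a Weiss-type monotonicity formula. The other is Caffarelli's mechanism: a thick contact set at one scale forces a Lipschitz, flat free boundary at all smaller scales. With $f$ merely continuous, the Weiss energy frozen at $f(x_0)$ is monotone only up to an error of order $\omega(r)/r$, where $\omega$ is the modulus of continuity of $f$. That error is integrable only under a Dini condition, so homogeneity is unavailable.

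Worse, the set of blow-ups is closed under $w\mapsto\lambda^{-2}w(\lambda\,\cdot)$ and under local uniform limits. So a single slab-type blow-up would itself generate the half-space solution (as $\lambda\to0$) and the polynomial $\frac a2y_1^2$ (as $\lambda\to\infty$) as blow-ups. Hence no argument that inspects blow-ups one at a time can rule out the density oscillating between $0$ and $\tfrac12$.

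What is missing is persistence of regular behaviour across scales, which is the real content of Blank's Theorem~4.3. You need a threshold $\varpi(r)\to0$, as in Proposition~\ref{lemma:universalboundondensityatsingularpoint}, with the following property. Suppose $|\Lambda(v)\cap B_r(x_0)|\ge\varpi(r)|B_r|$ at a single small $r$. Then:
\begin{itemize}
\item By the measure stability of Lemma~\ref{lemma:measurestabilityforblowups}, the constant-coefficient solution $\bar v_r$ with the boundary data of $v_r$ has a thick contact set.
\item Caffarelli's constant-coefficient theory then makes $\bar v_r$ monotone in a cone of directions, so its free boundary is Lipschitz and flat in $B_{1/2}$.
\item You iterate over dyadic scales, re-freezing $f$ at $f(x_0)$ each time, so that the flatness gained from the constant-coefficient problem absorbs the new error $\omega(2^{-k}r)$. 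This keeps the contact set close to a (possibly rotating) half-space at every smaller scale, which forces the density to $\tfrac12$.
\end{itemize}
If the threshold is never exceeded, the density stays below $\varpi(r)\to0$. Without this scale-by-scale stability argument, your proof does not establish the alternative.
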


\subsection{Blow-ups at free boundary points}\label{sec:blowupsatfreeboundarypoints}

As is typical of the obstacle problem, a crucial tool for our study of the free boundary will be blow-ups of $v$. Let $U \subset \mathbf{R}^n$ be a domain, and let $v \in C^{1,1}(\overline{U})$ be a nonnegative function satisfying $\Delta v \geq c_0 > 0$. Take $x_0 \in F(v)$ to be a point in the free boundary. For convenience, choose coordinates so that $x_0 = 0$.
For any $r > 0$, set
\begin{equation}\label{eq:rescalingdefinition}
    v_{r}(x) := \frac{1}{r^2}v(rx). 
\end{equation}
Then, for any sequence $\{s_k\}_{k=1}^{\infty}$ of nonnegative real numbers with $s_k \to 0$, the fact that $v \in C^{1,1}(\overline{U})$ and the Arzela-Ascoli theorem implies that there is some globally defined $v_{\infty} \in C^{1,1}(\mathbf{R}^n)$ and a subsequence (which we will not relabel) such that 
\[
v_{s_{k}} \to v_{\infty}
\]
with
\[
\nabla v_{s_k} \to \nabla v_{\infty}
\]
uniformly on compact subsets of $\mathbf{R}^n$. Such a $v_{\infty}$ is called a \textit{blow-up of $v$ about $x_0$}.

The following lemma follows from \cite{Blank01}*{Theorem 4.1}, and is implicit in \cite{Blank01}. 
\begin{lemma}[Blank's measure stability for blow-ups \cite{Blank01}]\label{lemma:measurestabilityforblowups}
    Let $v_1, v_2 \in C^{1,1}(\overline{B}_1)$, where $B_1 \subset \mathbf{R}^n$ is the unit ball, and let $f_1 \in C(\overline{B}_1)$ and $f_2 \in \mathbf{R}$, with $f_1 , f_2 \geq \lambda > 0$. Suppose $\Delta v_{i} =  f_i\characteristic{\{v_i > 0\}}$ on $B_1$. Then, there is some $C$ depending only on $n$, $||f_1||_{L^{\infty}(B_1)}$, $f_2$, and $\lambda$ such that
    \[
    |\Lambda(v_1)\Delta \Lambda(v_2)| \leq C\big(||f_1 - f_2||_{L^{\infty}(B_1)} + \sqrt{||v_1 - v_2||_{L^{\infty}(B_1)}} \,\, \big),
    \]
    where $A \Delta B$ denotes the symmetric difference of sets, and $|A \Delta B|$ its volume.
\end{lemma}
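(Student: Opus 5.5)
We compare the two obstacle problems through an intermediate solution: solve the problem with the frozen right-hand side $f_2$, using boundary data from $v_1$, and estimate two symmetric differences separately, one in $f$ and one in the boundary data. Concretely, let $w$ be the solution of the obstacle problem $\Delta w = f_2 \characteristic{\{w>0\}}$, $w\ge 0$, in $B_1$ with $w = v_1$ on $\partial B_1$. Equivalently, $w$ minimizes $\int \frac12|\nabla w|^2 + f_2 w$ over nonnegative $w$ with these boundary values. By the triangle inequality for symmetric differences,
\[
|\Lambda(v_1)\Delta\Lambda(v_2)| \le |\Lambda(v_1)\Delta\Lambda(w)| + |\Lambda(w)\Delta\Lambda(v_2)|.
\]

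\textbf{First term.} This is the $f$-perturbation with equal boundary data. Take $g := \|f_1-f_2\|_{L^\infty}$. Solutions of the obstacle problem are monotone in the right-hand side: a larger $f$ gives a smaller solution and a larger contact set, by the comparison principle for the variational inequality. Let $w^\pm$ be the solutions with constant right-hand sides $f_2 \pm g$ and the same boundary data as $v_1$. Then $w^+ \le v_1, w \le w^-$, and hence
\[
\Lambda(w^-)\subset \Lambda(v_1),\Lambda(w) \subset\Lambda(w^+).
\]
It therefore suffices to bound $|\Lambda(w^+)\setminus\Lambda(w^-)|$.

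For this I would use the scaling trick available for constant $f$: $\tilde w := \frac{f_2-g}{f_2+g}\,w^-$ solves the problem with right-hand side $f_2-g$ and boundary data multiplied by $1-O(g/\lambda)$. This reduces the comparison to a perturbation of boundary data of size $O(g\|v_1\|_\infty/\lambda)$. The interior difference is then controlled by the maximum principle applied to $w^+-w^-$, which is superharmonic up to an error of size $O(g)$ on the noncontact set. Hence $\|w^+-w^-\|_{L^\infty}\le Cg$.

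The measure bound comes from integrating the equations. On $\Lambda(w^+)\setminus\Lambda(w^-)$ we have $\Delta(w^- - w^+) = f_2 - g \ge \lambda$ in a distributional sense. Testing against a cutoff gives $|\Lambda(w^+)\setminus\Lambda(w^-)|\,\lambda \lesssim \int \Delta(w^- - w^+) + Cg$. After integrating by parts, this is bounded by $C(g+\|w^+-w^-\|_\infty)$, using interior $C^{1,1}$ bounds depending on $\|f\|_\infty$. This yields a bound linear in $g$.

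\textbf{Second term.} This is the core of Blank's Theorem 4.1: same constant $f_2$, with $\|w - v_2\|_{L^\infty}\le \|v_1-v_2\|_\infty + Cg =: \delta$ by the maximum principle. Here the key nondegeneracy estimate does the work. Any point $y\in\Omega(w)$ satisfies $\sup_{B_\rho(y)} w \ge c\lambda\rho^2$, while $w\le C\,\dist(\cdot,\Lambda)^2$ by the $C^{1,1}$ bound.

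Now take a point of $\Lambda(v_2)$ where $w>0$. There $w$ must stay at most $\delta$. By nondegeneracy and the quadratic growth bound, the set $\{0<w\le\delta\}$ is confined to a $C\sqrt{\delta/\lambda}$-neighbourhood of $F(w)$.

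What remains is the measure of $\{0<w<\delta\}$, which I would bound directly. Integrate $\Delta w = f_2\ge\lambda$ against $\min(w,\delta)$, or use the coarea formula on level sets. This gives $\lambda|\{0<w<\delta\}| \le \int_{\{w<\delta\}}\Delta w$. Integrating by parts, that integral equals the flux $\int_{\{w=\delta\}}|\nabla w|$, and $|\nabla w|\le C\sqrt{\delta}$ there by the $C^{1,1}$ bound. The perimeter of $\{w=\delta\}$ is then controlled via the coarea formula on average in $\delta$. This produces $|\{0<w<\delta\}|\le C\sqrt\delta$, and hence the $\sqrt{\|v_1-v_2\|}$ term.

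The main obstacle is this last step: obtaining the $\sqrt\delta$ measure bound without any a priori perimeter estimate on level sets. If the integration-by-parts route fails, I would instead cite Blank's Theorem 4.1 directly for this constant-coefficient piece, which is exactly its content. The lemma's dependence on $\|f_1\|_\infty$, $f_2$ and $\lambda$ enters only through the $C^{1,1}$ and nondegeneracy constants.
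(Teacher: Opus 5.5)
Your decomposition matches the paper's: the same intermediate solution $w$ (right-hand side $f_2$, boundary data $v_1$) and the same triangle inequality. But the key estimate, $|\{0<w<\delta\}\cap B_\rho|\le C\sqrt\delta$ for a solution with \emph{constant} right-hand side, is not established.

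Your flux/coarea route is circular. Write $m(s)=|\{0<w<s\}|$, $P(s)=\mathcal H^{n-1}(\{w=s\})$, and let $M$ be the $C^{1,1}$ bound. The flux identity gives $\lambda m(s)\le\sqrt{2Ms}\,P(s)$, and the coarea formula gives $\int_0^\delta P(s)\,ds\le\sqrt{2M\delta}\,m(\delta)$. Combining them yields only $m(\delta/2)\le(4M/\lambda)\,m(\delta)$, so no bound on $P$ independent of $m$ ever appears.

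The nondegeneracy step is false as stated. The function $w=\frac{f_2}{2}x_1^2+\epsilon$ solves $\Delta w=f_2\characteristic{\{w>0\}}$ with $F(w)=\emptyset$, yet $\{0<w<\delta\}$ is a nonempty slab once $\delta>\epsilon$. Nondegeneracy bounds $\sup_{B_\rho(y)}w$ from below, not $w(y)$. In any case, measuring a $\sqrt\delta$-neighbourhood of a free boundary again requires a perimeter bound.

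The missing idea is Caffarelli's:
\begin{itemize}
\item Because $f_2$ is constant, each $\partial_e w$ is harmonic in $\{w>0\}$ and vanishes on $\Lambda(w)$.
\item Testing with $\max(-\epsilon,\min(\partial_e w,\epsilon))\,\phi$ gives $\int_{\{|\partial_e w|<\epsilon\}}|\nabla\partial_e w|^2\phi\le C\epsilon$.
\item Since $|D^2w|^2\ge f_2^2/n$ on $\{w>0\}$, summing over coordinate directions gives $|\{w>0,\ |\nabla w|<\epsilon\}\cap B_\rho|\le C\epsilon$ for $\rho<1$.
\item Finally $|\nabla w|^2\le 2Mw$ converts this into $|\{0<w<\delta\}\cap B_\rho|\le C\sqrt\delta$.
\end{itemize}
This is exactly what the paper imports from Caffarelli's 1981 paper (Corollary~4) or Blank's Theorem~3.6. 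Your fallback citation points to the wrong result: the paper uses Blank's Theorem~4.1 for the \emph{first} term (equal boundary data, perturbed right-hand side), not for this one.

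Two smaller repairs are also needed.

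First, take $\delta=\|v_1-v_2\|_{L^\infty}$. Since $w$ and $v_2$ share the right-hand side $f_2$, comparison gives $\|w-v_2\|_{L^\infty(B_1)}\le\|v_1-v_2\|_{L^\infty(\partial B_1)}$; this is the paper's appeal to Blank's Theorem~2.7(a). With your $\delta=\|v_1-v_2\|_\infty+Cg$, the second term contributes $C\sqrt g$, which is not $O(g)$. The stated linear dependence on $\|f_1-f_2\|_{L^\infty}$ would then be lost.

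Second, in the first term your sandwich $w^+\le v_1,w\le w^-$ is the right idea, but the execution has three problems:
\begin{itemize}
\item The scaling trick must act on $w^+$. As written, $\frac{f_2-g}{f_2+g}w^-$ has right-hand side $(f_2-g)^2/(f_2+g)$.
\item It makes $C$ depend on $\|v_1\|_{L^\infty}$, which the statement excludes.
\item The cutoff only yields an interior bound.
\end{itemize}
None of this is needed. Since $w^--w^+\ge0$ vanishes on $\partial B_1$, we have $\int_{B_1}\Delta(w^--w^+)=\int_{\partial B_1}\partial_\nu(w^--w^+)\le0$. Now $\Delta(w^--w^+)$ equals $f_2-g$ on $\Lambda(w^+)\setminus\Lambda(w^-)$, $-2g$ on $\{w^+>0\}$, and $0$ elsewhere. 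Hence $(f_2-g)\,|\Lambda(w^+)\setminus\Lambda(w^-)|\le 2g|B_1|$ on all of $B_1$ when $g\le\lambda/2$; larger $g$ is trivial.
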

\begin{proof}
  Take $w \in C^{1,1}(\overline{B}_1)$ to be the solution to
    \[
    \bigg \{ \begin{matrix}
        \Delta w = f_2\characteristic{\{w > 0\}} & \text{ on }B_1, \\
        w = v_1 &\text{ on }\partial B_1. 
    \end{matrix}
    \]
    By \cite{Blank01}*{Theorem 4.1}, we know that 
    \[
    |\Lambda(v_1) \Delta \Lambda(w)| \leq C||f_1 - f_2||_{L^{\infty}(B_1)},
    \]
    for $C$ depending only on $n$, $||f_1||_{L^{\infty}(B_1)}$, and $f_2$. 
    By \cite{Caffarelli81}*{Corollary 4} or \cite{Blank01}*{Theorem 3.6}, we have 
    \begin{equation}
      \label{eq:cr-query}
      |\Lambda(w) \Delta \Lambda(v_2)| \leq C\sqrt{||v_{2} - w||_{L^{\infty}(B_1)}}.
    \end{equation}
    But by \cite{Blank01}*{Theorem 2.7(a)}, we have
    \[
    ||w - v_2||_{L^{\infty}(B_1)} \leq C(n)||v_1 - v_2||_{L^{\infty}(B_1)}. 
    \]
    So, since
    \[
    \Lambda(v_1) \Delta \Lambda(v_2) \subset (\Lambda(v_1) \Delta \Lambda(w)) \cup (\Lambda(w) \Delta \Lambda(v_2)),
    \]
    we conclude that there is some $C$ depending only on $n$, $||f_1||_{L^{\infty}(B_1)}$, $f_2$ and $\lambda$ such that
    \[
    |\Lambda(v_1) \Delta \Lambda(v_2)| \leq C \big(||f_1 - f_2||_{L^{\infty}(B_1)} + \sqrt{||v_1 - v_2||_{L^{\infty}(B_1)}}\,\,\big).
    \]
\end{proof}

Now, we will recall the classification of blow-ups for the obstacle problem. This result is standard, and follows easily in the low-regularity case from the results of \cites{Blank01, Caffarelli98}; we will include the proof for completeness.

\begin{proposition}[Classification of blow-ups \cites{Blank01, Caffarelli98}]\label{prop:classificationofblow-ups}
    Let $U \subset \mathbf{R}^n$, and let $v \in C^{1,1}(\overline{U})$ be a nonnegative function such that $\Delta v =  \characteristic{\{v > 0\}} f$ for some $f \in C(\overline{U})$ satisfying $f \geq c_0 > 0$ on $\overline{U}$. Let $x_0 \in F(v)$, and let $v_{\infty}$ be a blow-up of $v$ about $x_0$.
    \begin{enumerate}[label=(\alph*)]
        \item If $x_0$ is a singular point, then there is some symmetric, positive semi-definite matrix $Q_{\infty} \in \mathbf{R}^{n\times n}$ with $\mathrm{tr}(Q_{\infty}) = 1$ such that
        \[
        v_{\infty}(x - x_0) = \frac{f(x_0)}{2}x^T Q_{\infty} x.
        \]
        \item If $x_0$ is a regular point, then there is some $\tau \in \mathbf{S}^1$ such that 
        \[
        v_{\infty}(x- x_0) = \frac{f(x_0)}{2}(x \cdot \tau)_+^{\ 2},
        \]
        where $(t)_+ := \max\{t, 0\}$. 
    \end{enumerate}
\end{proposition}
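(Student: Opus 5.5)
The plan is to reduce to the classical classification of global solutions with constant right-hand side. Translate so that $x_0=0$, and take a sequence $s_k\to0$ with $v_{s_k}\to v_\infty$ in $C^1_{\mathrm{loc}}$.

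\textbf{Step 1: the blow-up is a global solution.} Each rescaling $v_{s_k}$ solves $\Delta v_{s_k}=f(s_k x)\characteristic{\{v_{s_k}>0\}}$ with $f(s_kx)\to f(0)$ locally uniformly, by continuity of $f$. The uniform $C^{1,1}$ bound gives weak-$*$ convergence of $D^2 v_{s_k}$ in $L^\infty_{\mathrm{loc}}$. On compact subsets of the open set $\{v_\infty>0\}$ we have $v_{s_k}>0$ for large $k$, so $\Delta v_\infty=f(0)$ there. On the interior of $\{v_\infty=0\}$ we have $\Delta v_\infty=0$ trivially. It remains to show $|\p\{v_\infty>0\}|=0$ and $\Lambda(v_{s_k})\to\Lambda(v_\infty)$ in measure. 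For this I apply Lemma \ref{lemma:measurestabilityforblowups} on each ball $B_\rho$ after scaling, comparing $v_{s_k}$ with $v_{s_j}$ and using $\|f(s_k\cdot)-f(0)\|_{L^\infty(B_\rho)}\to0$. This makes $\characteristic{\Lambda(v_{s_k})}$ Cauchy in $L^1(B_\rho)$. Nondegeneracy $\sup_{B_r(y)}v_{s_k}\ge c_0r^2/(2n)$ at free boundary points then passes to the limit. Hence $v_\infty\ge0$ is a $C^{1,1}$ global solution of $\Delta v_\infty=f(0)\characteristic{\{v_\infty>0\}}$ with $0\in F(v_\infty)$. This identification of the limiting contact set is the main technical step, and I would rely on Blank's stability estimates rather than on monotonicity formulas.

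\textbf{Step 2: the limiting contact density.} Because $\characteristic{\Lambda(v_{s_k})}\to\characteristic{\Lambda(v_\infty)}$ in $L^1(B_1)$, we get
\[
\frac{|\Lambda(v_\infty)\cap B_1|}{|B_1|}=\lim_k\frac{|\Lambda(v)\cap B_{s_k}|}{|B_{s_k}|}.
\]
By Theorem \ref{theorem:blankcaffarellialternative} this equals $0$ at singular points and $\tfrac12$ at regular points. Applying the same argument on each $B_\rho$, and noting that $v_{\infty,\rho}$ is again a blow-up, the density of $\Lambda(v_\infty)$ in $B_\rho$ is the same constant for every $\rho$.

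\textbf{Step 3: classification of global solutions.} The function $w=v_\infty/f(0)$ solves the normalized global obstacle problem. By Caffarelli's theory \cite{Caffarelli98}, global solutions have convex coincidence set, and the blow-up of a blow-up is homogeneous of degree two. I would reduce to homogeneity using Weiss monotonicity for $w$, which has constant right-hand side. The alternative is Caffarelli's direct argument: with $\Lambda(w)$ convex, either $\Lambda(w)$ has empty interior, or it has positive density at $0$ (Step 2).

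\begin{itemize}
\item[(i)] If $\Lambda(w)$ has empty interior, then it has measure zero by convexity, so $\Delta w=1$ a.e. on $\mathbf R^n$. With $D^2w$ bounded, Liouville applied to second derivatives makes $w$ a quadratic polynomial. Since $w\ge0$ and $w(0)=|\nabla w(0)|=0$, this polynomial is $\tfrac12x^TQx$ with $Q\ge0$ and $\mathrm{tr}Q=1$. This gives case (a).
\item[(ii)] If $\Lambda(w)$ has density exactly $\tfrac12$ in every ball $B_\rho$ and is convex with $0$ on its boundary, then it is a half-space $\{x\cdot\tau\le0\}$, because a convex cone of density $\tfrac12$ is a half-space. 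The one-dimensional problem in the $\tau$ direction, together with uniqueness of solutions vanishing on the half-space (Liouville again for the difference, using the Cauchy–Kovalevskaya-type fact that $w$ and $\partial_\tau w$ vanish on the boundary hyperplane), gives $w=\tfrac12(x\cdot\tau)_+^2$. This is case (b).
\end{itemize}

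Conversely, regular points can only give case (b), since case (a) has contact density $0$. Singular points can only give case (a), since half-space solutions have contact density $\tfrac12$.
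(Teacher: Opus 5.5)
Your proof is correct, but it takes a more self-contained route than the paper's. The paper cites Blank's Theorem 2.10(c) for part (b) outright. For part (a), it cites Blank's Theorem 2.10(b) to know that $v_\infty$ is a global solution with constant right-hand side $f(x_0)$. It then applies Lemma~\ref{lemma:measurestabilityforblowups} with $v_2=v_\infty$ to transfer zero contact density to $|\Lambda(v_\infty)|=0$. Caffarelli's convexity of global solutions puts $\Lambda(v_\infty)$ inside a hyperplane, and Weyl plus Liouville on $\partial_{ij}v_\infty$ finishes, with $Q_\infty\ge0$ coming from convexity.

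Your case (i) is essentially that same argument. Getting $Q\ge0$ from $w\ge0$ and $w(0)=\nabla w(0)=0$ is equally fine. What you add are proofs of the ingredients the paper outsources:
\begin{itemize}
\item convergence of the contact sets in measure, via nondegeneracy;
\item a direct derivation of (b) from contact density $\tfrac12$ at every scale, convexity of $\Lambda(w)$, and uniqueness for $w-\tfrac12(x\cdot\tau)_+^2$, which is $C^{1,1}$, harmonic a.e., and vanishes on a half-space.
\end{itemize}
This route bypasses homogeneity and Weiss monotonicity entirely. The paper's version is much shorter but leans on Blank for everything except (a).

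Two small repairs are needed.

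First, Lemma~\ref{lemma:measurestabilityforblowups} requires $f_2$ to be a constant, so it does not directly compare $v_{s_k}$ with $v_{s_j}$. You do not need that Cauchy property, though. Nondegeneracy and locally uniform convergence already give $\characteristic{\Lambda(v_{s_k})}\to\characteristic{\Lambda(v_\infty)}$ a.e.: eventually $v_{s_k}>0$ on compacts of $\{v_\infty>0\}$, eventually $v_{s_k}=0$ on compacts of $\mathrm{int}\,\Lambda(v_\infty)$, and $|\partial\{v_\infty>0\}|=0$. Alternatively, since $v_\infty\in W^{2,\infty}_{\mathrm{loc}}$ and $\nabla v_\infty=0$ on $\{v_\infty=0\}$, you have $D^2v_\infty=0$ a.e. there. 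So $v_\infty$ solves the constant-coefficient problem, and the lemma applies with $v_2=v_\infty$, as in the paper.

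Second, you have not shown that $\Lambda(w)$ is a cone, but you do not need to. Let $H$ be a supporting half-space to the convex set $\Lambda(w)$ at $0$. Then $\Lambda(w)\subset H$ and $|\Lambda(w)\cap B_\rho|=|H\cap B_\rho|$ for every $\rho$. Since $\Lambda(w)$ is closed, this forces $\Lambda(w)=H$.
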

\begin{proof}
    Proposition \ref{prop:classificationofblow-ups}(b) is simply a restatement of \cite{Blank01}*{Theorem 2.10(c)}. 

    Now, suppose $x_0 \in F(v)$ is a singular point, and choose coordinates so that $x_ 0 = 0$. By \cite{Blank01}*{Theorem 2.10(b)}, $v_{\infty}$ is a global solution to the obstacle problem $v_{\infty} =  f(0)\characteristic{\{v_{\infty} > 0\}}$. In particular, $v_{\infty}$ is convex by \cite{Caffarelli98}*{Corollary 7}. Moreover, the definition of a singular point in Theorem~\ref{theorem:blankcaffarellialternative} combined with Lemma~\ref{lemma:measurestabilityforblowups} implies that $\mathcal{H}^{n}(\Lambda(v_{\infty})) = 0$. Since $\Lambda(v_{\infty})$ is convex, this implies that $\Lambda(v_{\infty})$ is contained in a hyperplane $\Gamma := \{x \in \mathbf{R}^n \ | \ x \cdot \tau = 0\}$ for some $\tau \in \mathbf{S}^{n-1}$. Since $v_{\infty}$ is $C^{1,1}$ and satisfies $\Delta v_{\infty} = f(0)$ except on a line, $v_{\infty}$ is a weak solution to $\Delta v_{\infty} = f(0)$, and therefore, by the Weyl lemma, is a classical solution to $\Delta v_{\infty}(x) = f(0)$  for each for each $x \in \mathbf{R}^{n}$. Moreover, since
$v_{\infty} \in C^{1,1}(\mathbf{R}^n)$, $\partial_{ij}v_{\infty}$ is a bounded harmonic function on $\mathbf{R}^n$, and thus is constant by Liouville's theorem. So, $v_{\infty}$ is a degree-two polynomial. We know that both $v_{\infty}$ and its gradient vanish at the origin, and since $v_{\infty}$ is convex, its Hessian is positive semi-definite. So, by taking 
    \[
    Q_{\infty}= \frac{1}{f(0)}D^2 v_{\infty}(0)
    \]
    we see $v_{\infty}$ must have the desired form. 
\end{proof}

Thanks to the ray structure of the monopolist's problem, we can in fact show that there is a unique blow-up at a singular point of the free boundary. 

\begin{corollary}[Unique blow-up at singular points for the monopolist's problem]\label{cor:uniqueblow-upsatsingularpoints}
    Let $v$, $U$, and $u_1$ be defined as in Lemma \ref{lemma:reductiontotheobstacleproblem}, and let $x_0 \in F(v)$ be a singular point. Then, $v$ has a unique blow-up at $x_0$, and 
    \[
    v_{\infty}(x- x_0) = \frac{3 - {}{\Delta}u_1(x_0)}{2}(x \cdot \xi^{\perp})^2,
    \]
    where $\xi^{\perp}$ is a unit normal vector to the ray $\tilde{x}_0$. 
\end{corollary}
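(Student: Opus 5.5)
By Proposition~\ref{prop:classificationofblow-ups}(a), applied with $f = 3-\Delta u_1$, every blow-up $v_\infty$ of $v$ at the singular point $x_0$ has the form $\frac{f(x_0)}{2}x^TQ_\infty x$. Here $Q_\infty$ is positive semi-definite with $\mathrm{tr}\,Q_\infty=1$, and $f$ is continuous by Proposition~\ref{lemma:C^2obstacle} and bounded below by $c_0$ by Lemma~\ref{lemma:reductiontotheobstacleproblem}. To obtain uniqueness, it therefore suffices to show that $Q_\infty$ does not depend on the chosen subsequence. The plan is to show that $Q_\infty$ has the ray direction $\xi=\xi(t_0)$ of $\tilde x_0$ in its kernel. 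Since $Q_\infty$ is symmetric and positive semi-definite with trace one, this forces $Q_\infty=\xi^\perp\otimes\xi^\perp$, which is exactly the claimed formula.

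The key step is to use the ray structure of the contact set. By Lemma~\ref{lemma:reductiontotheobstacleproblem}, $\Lambda(v)=\overline U\cap\Omega_1$ contains the segment $\tilde x_0\cap \overline U$. The point $x_0$ is the endpoint of the ray $\tilde x_0=\{\gamma(t_0)+r\xi: 0\le r\le R(t_0)\}$, with $x_0$ at $r=R(t_0)>r_0$, so $v$ vanishes on the segment $\{x_0 - s\xi : 0\le s\le R(t_0)-r_0\}$. Rescaling, $v_r(-s\xi)=0$ for $0\le s\le (R(t_0)-r_0)/r$. In the limit, $v_\infty$ vanishes on the entire half-line $\{-s\xi: s\ge0\}$, and by evenness of the quadratic form it vanishes on the whole line $\mathbf R\xi$. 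Hence $\xi^TQ_\infty\xi=0$. Since $Q_\infty\ge 0$, this gives $Q_\infty\xi=0$.

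It remains to identify the constant and combine. Because $Q_\infty$ is symmetric with $\xi$ in its kernel, it must be a multiple of $\xi^\perp\otimes\xi^\perp$, and the trace condition makes the multiple $1$. Substituting $f(x_0)=3-\Delta u_1(x_0)$ gives the stated expression. Since every subsequential limit coincides with this one function, the full family $v_r$ converges as $r\to0$, so the blow-up is unique.

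The only delicate point I anticipate is the bookkeeping: verifying that the ray segment on the $\Omega_1$ side really does lie in $U$ (which uses $R(t_0)>r_0$), and confirming that $x_0$ is the endpoint of its ray rather than some other point of it. This follows from $F(v)=U\cap\Gamma$ together with the fact that the rays of $\mathcal N$ end at $r=R(t)$. No estimate beyond uniform convergence of the rescalings is needed.
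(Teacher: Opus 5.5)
Your proposal is correct and follows the paper's proof: apply Proposition~\ref{prop:classificationofblow-ups}(a), use that $v$ vanishes along the ray $\tilde x_0$ to put $\xi$ in the kernel of $Q_\infty$, and conclude from the trace normalization that every subsequential blow-up is the same function. One small point, which the paper also leaves implicit: Lemma~\ref{lemma:reductiontotheobstacleproblem} gives $f=3-\Delta u_1\ge c_0$ only on $\{v>0\}$ (by \eqref{Laplacian u1}, $f<0$ where $r<2R(t)/3$), so to invoke Proposition~\ref{prop:classificationofblow-ups} you should replace $f$ by $\max\{f,c_0\}$ or localize near $x_0$, which changes neither the equation nor the value $f(x_0)$.
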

\begin{proof}
    Lemma \ref{lemma:reductiontotheobstacleproblem} and Proposition \ref{lemma:C^2obstacle} ensure that $v$ satisfy the hypotheses of Proposition \ref{prop:classificationofblow-ups}. 
    So, fix any blow-up $v_{\infty}$ of $v$ at $x_0$, and for convenience choose coordinates with $x_0 = 0$. Then, since $v = 0$ on $\tilde{x}_0$, $\Lambda(v_{\infty})$ must contain the line $\{t\xi \ | \ t < 0\}$. This implies that $\xi$ is a null eigenvector for the matrix $Q_{\infty}$ in Proposition \ref{prop:classificationofblow-ups}, and therefore $\xi^{\perp}$ is an eigenvector for $Q_{\infty}$ with eigenvalue $1$. Thus, $v_{\infty}$ has the desired form by Proposition \ref{prop:classificationofblow-ups}. 
\end{proof}

\subsection{Quantitative bounds on the density at singular points}

We now wish to get a quantitative bound on the rate at which the density of $\Lambda(v)$ goes to $0$ at a singular point. This is given by the following result from \cite{BlankHao15}. 

\begin{proposition}[Universal bound on the density at a singular point \cites{Blank01, BlankHao15}]\label{lemma:universalboundondensityatsingularpoint}
    Let $U \subset \mathbf{R}^n$ be a bounded domain and $\lambda>0$. 
    Suppose $v \in C^{1,1}(\overline{U})$ is a nonnegative function satisfying $\Delta v = f\characteristic{\{v > 0\}}$, where $f \in C(\overline{U})$ satisfies $f \geq \lambda > 0$. Then, the set of singular free boundary points $\mathcal{S} \subset F(v)$ is relatively closed, and there exists a  modulus of continuity $\varpi$ depending only on $\lambda$ and $n$
    with $\varpi(r) \to 0$ as $r \to 0^+$ such that for any singular point $x_0 \in F(v)$, we have
    \begin{equation}
      \label{eq:sing-point-density}
      \frac{|\Lambda(v) \cap B_{r}(x_0)|}{|B_{r}(x_0)|} < \varpi(r)
    \end{equation}
    for all $r > 0$.
\end{proposition}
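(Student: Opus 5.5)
The plan is a compactness-and-contradiction argument built on the blow-up classification in Proposition~\ref{prop:classificationofblow-ups} and Blank's measure stability in Lemma~\ref{lemma:measurestabilityforblowups}. The heart of it is a quantitative form of the Caffarelli alternative, Theorem~\ref{theorem:blankcaffarellialternative}. The claim is that there is a threshold $\delta_0>0$ and a scale function such that the following holds: if $x_0\in F(v)$ and $r$ is small enough that the oscillation of $f$ on $B_r(x_0)$ is small, then $|\Lambda(v)\cap B_r(x_0)|/|B_r|\ge\delta$ forces $x_0$ to be regular. Given this, one sets $\varpi(r)$ equal to the supremum of the densities $|\Lambda(v)\cap B_r(x_0)|/|B_r|$ over singular $x_0$. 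Showing $\varpi(r)\to0$ is then equivalent to the claim. The constants will a priori depend on $\lambda$, $n$, the $C^{1,1}$ norm of $v$ and the modulus of continuity of $f$ (uniform on $\overline U$ by Remark~\ref{R:Heine-Cantor}). I will follow \cite{BlankHao15} to see that after normalization only $\lambda$ and $n$ enter.

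First step (contradiction setup). Suppose there are $\delta>0$, radii $r_k\to0$ and singular points $x_k$ with $|\Lambda(v)\cap B_{r_k}(x_k)|\ge\delta|B_{r_k}|$. Rescale to $v_k(x):=r_k^{-2}v(x_k+r_kx)$ and $f_k(x):=f(x_k+r_kx)$. The uniform $C^{1,1}$ bound and Arzel\`a--Ascoli give, along a subsequence, $v_k\to w$ in $C^1_{\mathrm{loc}}$. Uniform continuity of $f$ gives $f_k\to c:=\lim f(x_k)\ge\lambda$ uniformly on compacts. Exactly as in the proof of Proposition~\ref{prop:classificationofblow-ups}, $w$ is a global solution of $\Delta w=c\,\characteristic{\{w>0\}}$ with $0\in F(w)$. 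By Caffarelli's classification it is convex, so $\Lambda(w)$ is convex. Lemma~\ref{lemma:measurestabilityforblowups}, applied on $B_1$ (and on every $B_s$ after rescaling) with $f_1=f_k$, $f_2=c$, gives $|\Lambda(v_k)\cap B_s|\to|\Lambda(w)\cap B_s|$. In particular $|\Lambda(w)\cap B_1|\ge\delta|B_1|$, so the convex set $\Lambda(w)$ has nonempty interior and contains $0$ on its boundary. Convexity then yields a lower bound $|\Lambda(w)\cap B_s|\ge c(\delta,n)|B_s|$ at every scale $s\le1$, by comparing with the convex hull of $0$ and a substantial piece of $\Lambda(w)\cap B_1$.

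Second step (the main obstacle). This is upgrading positive density of $\Lambda(v_k)$ at finitely many scales around $x_k$ to positive density at all scales, which contradicts $x_k$ singular. Measure stability alone only controls finitely many scales for each fixed $k$. I would instead use the global-solution classification more strongly. A global solution whose contact set has nonempty interior and contains the origin on its boundary is, near the origin at a suitable scale, $C^1$-close to a half-space solution $\tfrac c2(x\cdot\tau)_+^2$. I will then invoke the flatness-improvement iteration from \cite{Blank01} (the mechanism behind Theorem~\ref{theorem:blankcaffarellialternative}), which requires only continuity of $f$ and small oscillation on small balls. This shows that for large $k$ the point $x_k$ has density of $\Lambda(v)$ bounded below by a fixed constant at all smaller scales. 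By Theorem~\ref{theorem:blankcaffarellialternative} the point $x_k$ is then regular, a contradiction. If the half-space approximation is awkward, I would first try the alternative of choosing the intermediate scale $\rho_k\le r_k$ at which the density of $\Lambda(v)$ first drops to $\delta'\ll\delta$, and blowing up there instead.

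Third step (closedness). Let singular $x_j\to x_0\in F(v)$. For each fixed $r$, the density $y\mapsto|\Lambda(v)\cap B_r(y)|/|B_r|$ is continuous in the center $y$. Hence $|\Lambda(v)\cap B_r(x_0)|\le\varpi(r)|B_r|$ for every $r$, and letting $r\to0$ shows that $x_0$ falls into alternative (a) of Theorem~\ref{theorem:blankcaffarellialternative}. So $\mathcal S$ is relatively closed in $F(v)$.
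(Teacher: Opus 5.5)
The paper gives no argument of its own here. It notes that $f\in C(\overline U)$ is uniformly continuous, hence VMO, and quotes \cite{BlankHao15}*{Remark 4.6} for a vanishing density threshold above which a free boundary point must be regular. For closedness of $\mathcal S$ it quotes \cite{Blank01}*{Remark 4.9} and \cite{BlankHao15}*{Corollary 4.7}.

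Your route is genuinely different. You define $\varpi(r)$ as the supremum of contact densities at singular points and try to prove $\varpi(r)\to0$ by blow-up compactness, using only tools already in the paper: Lemma~\ref{lemma:measurestabilityforblowups}, nondegeneracy, and convexity of contact sets of global solutions. You then get closedness of $\mathcal S$ from the bound itself, via continuity of $y\mapsto|\Lambda(v)\cap B_r(y)|$. Steps~1 and~3 are correct, and Step~3 is more economical than the paper's separate citation.

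The price is that you must supply the propagation of density across scales, which the citation hides, and Step~2 does not yet do this. An iteration that follows a half-space solution from one dyadic scale to the next, with $f$ merely continuous, incurs errors of order $\omega_f(2^{-j}\rho)$. These need not be summable; summability is essentially Blank's Dini hypothesis. So ``invoking the flatness-improvement iteration'' does not by itself close the argument.

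\textbf{A repair.} Run your Step~1 uniformly over a class rather than along one sequence. Let $\mathcal G_\delta$ consist of the global solutions $w$ with $|D^2w|\le\|D^2v\|_{L^\infty}$, $\Delta w=c\,\characteristic{\{w>0\}}$ for a constant $c\in[\lambda,\max f]$, $0\in F(w)$, and $|\Lambda(w)\cap B_1|\ge\delta|B_1|$.
\begin{enumerate}
\item The class $\mathcal G_\delta$ is compact, by nondegeneracy plus Lemma~\ref{lemma:measurestabilityforblowups}.
\item On it the density $|\Lambda(w)\cap B_s|/|B_s|$ is continuous in $w$ and nonincreasing in $s$ (your convexity observation).
\item It tends to $\tfrac12$ as $s\to0$, because $0$ is a regular point of $w$.
\item Dini's monotone convergence theorem then gives $s_0=s_0(\delta)$ such that every $w\in\mathcal G_{\delta/2}$ has density at least $\tfrac12-\eta$ at scale $s_0$.
\item Compactness and Lemma~\ref{lemma:measurestabilityforblowups} at a single scale give $\rho_0(\delta)$ with the following property. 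Suppose $\delta\le\tfrac14$, $\rho\le\rho_0$, $x_0$ lies in a fixed compact subset of $U$, and the density at $(x_0,\rho)$ is at least $\delta$. Then the density is at least $\delta/2$ on $[s_0\rho,\rho]$, and at least $\delta$ again at $s_0\rho$.
\end{enumerate}
Each restart uses the same threshold, so nothing accumulates. The density at $x_0$ therefore stays at least $\delta/2$ at all scales below $\rho$, and $x_0$ is regular. This is exactly $\varpi\to0$, with no half-space detour.

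\textbf{The dependence claim.} Drop the plan to show that after normalization ``only $\lambda$ and $n$ enter''; it cannot succeed. The rescalings $t^{-2}v(x_0+t\,\cdot)$ satisfy the same hypotheses with the same $\lambda$, and they keep a singular point singular. Such a $\varpi$ would therefore give $|\Lambda(v)\cap B_r(x_0)|<\varpi(r/t)\,|B_r|$ for every $t>0$. Letting $t\to\infty$, $\Lambda(v)$ would be Lebesgue-null in every ball in $U$ centred at a singular point, which fails at cusp-type singular points.

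Your modulus, like Blank--Hao's, genuinely depends on the modulus of continuity of $f$, on $\max f$, and on $\|D^2v\|_{L^\infty}$. The compactness also needs $x_0$ to range over a compact subset of $U$ (or $r\ll\dist(x_0,\p U)$), so that the rescaled domains exhaust $\mathbf{R}^n$.

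The statement as printed overreaches on these points, but nothing downstream is affected. The proof of Theorem~\ref{theorem:singularsetisdiscrete} only uses $\varpi(Rr_k)\to0$ for one fixed $v$, at points converging to an interior point, and that is what the repaired argument delivers.
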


\begin{proof}
    Since $f \in C(\overline{U})$, $f$ is bounded and uniformly continuous by Remark \ref{R:Heine-Cantor}, and therefore $f$ is of vanishing mean oscillation. So, we may apply the results of \cite{BlankHao15,Blank01}. 
    The fact that $\mathcal{S}$ is relatively closed is given by \cite{Blank01}*{Remark 4.9}, \cite{BlankHao15}*{Corollary 4.7}. For the uniform modulus of continuity, recall that the work of Blank and Hao \cite{BlankHao15}*{Remark 4.6} gives a  modulus of continuity $\varpi$ {depending only on $\lambda$ and $n$}
    that at $x_0 \in F(v)$, the existence of an $r > 0$ for which
    \[\frac{|\Lambda(v) \cap B_r(x_0)|}{|B_r(x_0)|} \geq \varpi(r) \]
    implies $x_0$ is a regular point. The estimate \eqref{eq:sing-point-density}  follows at singular points.
\end{proof}

\subsection{Singular points are discrete}

The main theorem of this section shows that set of singular points of the monopolist's free boundary is a discrete subset of the tame free boundary, i.e. there is no tame free boundary point which is the limit of singular tame free boundary points. Before proceeding, we need to show that every singular point is a local maximum for $R$ ---
and not just the accumulation point of such maxima as implied by 
\cite[Theorem 7.8]{McCannRankinZhang24+}.

\begin{lemma}[Singular points are local maxima]\label{lemma:singularpointsarelocalmaxima}
    Let $v$, $U$, and $u_1$ be defined as in Lemma \ref{lemma:reductiontotheobstacleproblem}, and suppose $x_0 = x(R(0), 0) \in F(v)$ is a singular point. Then, $0$ is a strict local maximum of $R$.
\end{lemma}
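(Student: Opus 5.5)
We will argue by contradiction, combining the unique blow-up of Corollary \ref{cor:uniqueblow-upsatsingularpoints} with the density bound of Proposition \ref{lemma:universalboundondensityatsingularpoint}. Work in the coordinates of Remark \ref{remark:constructionof(r,t)coordinates}, with $x_0 = x(R(0),0)$ and $\xi = \xi(0)$. The contact set is $\Lambda(v) = \overline U \cap \Omega_1 = \{x(r,t) : r \le R(t)\}$. By Corollary \ref{cor:uniqueblow-upsatsingularpoints}, the rescalings $v_s$ converge to $\frac{3-\Delta u_1(x_0)}{2}(x\cdot\xi^\perp)^2$. The zero set of this limit is the full line through $x_0$ in direction $\xi$, so there is no preferred side along the ray.

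\emph{Step 1: $R(t) \le R(0)$ for small $|t|$.} Suppose instead that $t_k \to 0$ with $R(t_k) > R(0)$. The rays $\widetilde{\gamma(t_k)}$ lie in $\Lambda(v)$, have length at least $r_0$, and converge to $\tilde x_0$. Each of them passes within distance $O(|t_k|)$ of $x_0$ and extends beyond $x_0$ in direction roughly $\xi$. Since the $(r,t)$ coordinates are bi-Lipschitz, the segments $\{x(r,t): R(0)-\delta \le r \le R(t)\}$ all lie in $\Lambda$.

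We then rescale at the scale $s_k = R(t_k) - R(0) + |t_k|$, or at the scale of the ray overhang. At that scale, $v_{s_k}$ vanishes on a segment that reaches a definite distance beyond $0$ in direction $\xi$. By uniform $C^{1,1}$ convergence and nondegeneracy ($\Delta v \ge c_0$, so $\sup_{B_\rho(y)} v \ge c\rho^2$ at free boundary points $y$), the free boundary points $x(R(t_k),t_k)$ rescale to points of $F(v_\infty)$ on the positive $\xi$-axis.

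This alone gives no contradiction, because the whole line is zero for $v_\infty$. We therefore need a volume statement. Because the rays fan out (Remark 6.4), the union of rays over $t$ in an interval of length comparable to $s_k$ around $t_k$ contains a sector-like region of area at least $c\,s_k^2$ inside $B_{Cs_k}(x_0) \cap \{(x - x_0)\cdot\xi > 0\}$. That region lies in $\Lambda(v)$ provided $R$ stays above $R(0)$ nearby. The density of $\Lambda$ in $B_{Cs_k}(x_0)$ would then be bounded below, contradicting Proposition \ref{lemma:universalboundondensityatsingularpoint} as $s_k \to 0$.

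\emph{Main obstacle.} The difficulty is that $R$ is merely continuous, so $R(t) > R(0)$ might hold only on a sparse set of $t$ near $t_k$. The plan is to use continuity of $R$ instead. If $R(t_k) > R(0)$, then $R > R(0)$ on a maximal interval $I_k \ni t_k$. Either $I_k$ has length comparable to $R(t_k) - R(0)$, which gives the density contradiction, or the oscillations of $R$ are steep. In the steep case we invoke the oscillation-exclusion argument of \cite[Lemma 7.4, Theorem 7.8]{McCannRankinZhang24+}, which combines the Laplacian formula \eqref{Laplacian u1} with the ray-length identity \eqref{Ray length}. We will try to quantify this via \eqref{Ray length}: $|\dot\xi| \approx 2|\dot\gamma|(\nabla u-x)\cdot\mathbf n/R^2$ is bounded above and below, so rays through nearby $t$ separate at a linear rate. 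Any segment of a longer ray beyond $x_0$ then sweeps out area.

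\emph{Step 2: strictness.} If $R(t_k) = R(0)$ with $t_k \to 0$, $t_k \ne 0$, then the tips $x(R(t_k),t_k)$ are free boundary points on the circle-like curve $r = R(0)$. We show that a full neighbourhood sector beyond the tips would be forced into $\Omega_2$ with $\Lambda$ containing the region $r \le R(0)$. That region is a half-ball to first order, since the tip curve is Lipschitz and the rays are transversal to it. The density of $\Lambda$ at $x_0$ would then be about $1/2$, contradicting singularity via Proposition \ref{lemma:universalboundondensityatsingularpoint}. If these equality points do not fill an interval, we again reduce to the oscillation argument of Step 1.

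Combining the two steps shows that $0$ is a strict local maximum of $R$.
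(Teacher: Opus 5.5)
There is a genuine gap, and it comes from your choice of scale. Your Step~1 rescales at $s_k = R(t_k)-R(0)+|t_k|$ and looks along the ray direction $\xi$. There, as you note, the blow-up $\frac{3-\Delta u_1(x_0)}{2}(x\cdot\xi^\perp)^2$ vanishes, which forces you into a density argument. That argument needs $R$ to exceed $R(0)$ by a definite amount on a $t$-interval of length comparable to the scale. Continuity of $R$ does not give this: a thin spike of $R$ near $t_k$ sweeps out negligible area. The oscillation-exclusion you invoke from \cite{McCannRankinZhang24+} does not supply it either. Theorem~7.8 there only shows that singular points are accumulation points of local maxima of $R$, which is exactly what this lemma upgrades, and you give no concrete mechanism for the ``steep'' case. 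Step~2 has the same problem, since $\{r\le R(0)\}\subset\Lambda(v)$ near $x_0$ only if $R\ge R(0)$ on a whole interval.

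The missing idea is that a single transversal contact point suffices, with no density estimate at all.
\begin{itemize}
\item If $R(t_k)\ge R(0)$, strictly or not, then $y_k := x(R(0),t_k)$ lies on the ray $\widetilde{\gamma(t_k)}\subset\Lambda(v)$, so $v(y_k)=0$.
\item Rescale at $r_k:=|y_k-x_0|$, which is comparable to $|t_k|$ by the bi-Lipschitz property of the $(r,t)$ coordinates.
\item The same property shows that $(y_k-x_0)/r_k$ stays at a definite angle from the line $\mathbf R\,\xi(0)$. So along a subsequence it converges to some $y_\infty\in\mathbf S^1$ with $y_\infty\cdot\xi(0)^\perp\neq 0$.
\item By uniqueness of the blow-up (Corollary~\ref{cor:uniqueblow-upsatsingularpoints}), $v_{r_k}\to v_\infty$ locally uniformly. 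Hence
\[
0=\lim_{k\to\infty} r_k^{-2}\,v(y_k)=v_\infty(y_\infty)=\tfrac{3-\Delta u_1(x_0)}{2}\,(y_\infty\cdot\xi(0)^\perp)^2>0,
\]
a contradiction.
\end{itemize}
This argument only uses $R(t_k)\ge R(0)$, so it proves strictness at the same time and your separate Step~2 is unnecessary. Your observation that neighbouring rays pass within $O(|t_k|)$ of $x_0$ was the right ingredient. Adding the overhang $R(t_k)-R(0)$ to the scale is what collapses the transversal information onto the axis, where $v_\infty$ vanishes.
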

\begin{proof}
    For the sake of contradiction, suppose $x_{\infty} = x(R(0), 0) \in F(v)$ is a singular point that is not a strict local maximum of $R$, and let $\{t_k\}_{k=1}^{\infty} \subset (-\epsilon ,\epsilon)$ be a sequence of points with $t_k \to 0$ as $k \to \infty$ and $R(t_k) \geq R(0)$ for every $k$. Choose coordinates so that $x_{\infty} = 0$ and 
    \[
    \widetilde{x_{\infty}} = \{t e_2 \ |\ -R(0) \leq t \leq 0\},
    \]
    and write 
    \[
    x_k := x(R(t_k), t_k)
    \]
    in the $(r,t)$-coordinates of Remark \ref{remark:constructionof(r,t)coordinates}. Define
    \[
    y_k := x(R(0), t_k).
    \]
    Since $R(0) \leq R(t_k)$ for each $k$, we know that $y_k \in \widetilde{x_k}$ for each $k$, and consequently 
    \begin{equation}\label{eq:v(y_k)iszero}
        v(y_k) = 0 \qquad \text{for every $k$.}
    \end{equation}
 Now setting
    $r_k := |y_k|$,    
     we have for large $k$ that
    \[
    (\frac{y_k}{ r_k} \cdot e_1)^2 \geq c > 0
    \]
    for some $c$ depending only on the Lipschitz constant for the $(r,t)$-coordinate change.
    Moreover, $\frac{y_k}{r_k} \in \mathbf{S}^1$, and so possibly passing to a subsequence we may assume that $\frac{y_k}{r_k} \to y_{\infty}$ for some $y_{\infty} \in \mathbf{S}^1$. Clearly,
    \[(y_{\infty} \cdot e_1)^2 \geq c > 0.
    \]
    By Corollary \ref{cor:uniqueblow-upsatsingularpoints}, let \[
    v_{\infty}(x) := \frac{3 - \Delta u_1(0)}{2}(x \cdot e_1)^2
    \]
    be the unique blow-up of $v$ at $x_{\infty}$. Then, since $v_{r_k} \to v_{\infty}$ uniformly on compact sets, we have
    \begin{align*}
        0 &= \lim_{k \to \infty}\frac{1}{r_k^2}v(y_k) \\
        &= \lim_{k \to \infty}v_{r_k}(\frac{y_k}{r_k}) \\
        &= v_{\infty}(y_{\infty}) \\
        & = \frac{3 - \Delta u_1(0)}{2}(y_{\infty}\cdot e_1)^2 \\
        &> 0,
    \end{align*}
    a contradiction. 
\end{proof}

Now, we are ready to prove that the set of tame singular points is discrete;
for Example \ref{R:square} --- the square  of Rochet and Chon\'e \cites{RochetChone98, McCannRankinZhang24+} --- this means any singular points can only accumulate at two points of the fixed boundary and/or one or two points on $\Omega_0\cup \Omega_1^0$.

\begin{theorem}[Singular set is discrete]\label{theorem:singularsetisdiscrete}
    Let $u$ solve the monopolist's problem on a convex domain $X \subset \mathbf{R}^2$ satisfying hypotheses (a)--(c) of Theorem \ref{thm:mainresults}.     Let $\mathcal{T} \subset \Gamma$ be the set of tame free boundary points, 
    and let $\mathcal{S} \subset \mathcal{T}$ be the subset of singular tame free boundary points, 
    as in \eqref{Omega1+}--\eqref{eq:DefSingular}. Then, $\mathcal{S}$ is a discrete subset of $\mathcal{T}$ in the subspace topology.
\end{theorem}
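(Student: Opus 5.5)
Fix a point $x \in \mathcal{T}$ and argue by contradiction: suppose there are distinct singular points $x_k \in \mathcal{S}$ with $x_k \to x$. Work in the neighbourhood $U$ and the $(r,t)$-coordinates of Lemma \ref{lemma:reductiontotheobstacleproblem} and Remark \ref{remark:constructionof(r,t)coordinates}, with $x = x(R(0),0)$. For large $k$, $x_k \in U \cap \Gamma = F(v)$, so $x_k = x(R(t_k),t_k)$ with $t_k \to 0$. The $t_k$ are distinct, since each tame ray meets $\partial\Omega_2$ only at its far endpoint. Note that $v$ obeys $\Delta v = (3-\Delta u_1)\characteristic{\{v>0\}}$, and $f := 3-\Delta u_1$ is continuous by Proposition \ref{lemma:C^2obstacle}. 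Moreover $f \ge c_0$ on $\overline U$, because \eqref{eq:obstaclelaplacianbound} holds on the closure by continuity. So Proposition \ref{lemma:universalboundondensityatsingularpoint} applies: $\mathcal{S}\cap U$ is relatively closed in $F(v)$, hence $x$ is itself singular. Lemma \ref{lemma:singularpointsarelocalmaxima} then applies at $x$ and at every $x_k$, so $0$ and every $t_k$ are strict local maxima of the continuous function $R$.

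The plan is to contradict this with the blow-up profile. The idea is a uniform version of the proof of Lemma \ref{lemma:singularpointsarelocalmaxima}. By Corollary \ref{cor:uniqueblow-upsatsingularpoints}, the blow-up of $v$ at any singular point $x_k$ is $\frac{f(x_k)}{2}(y\cdot\xi(t_k)^\perp)^2$. This is positive off the line of the ray, so at scales $s$ below some $s_k$ the contact set of $v$ near $x_k$ lies in a thin double cone around the line through $\widetilde{x_k}$. I would aim for a uniform version of this: there exist $s_* > 0$ and $\delta > 0$, independent of $k$, such that $\Lambda(v) \cap B_{s_*}(x_k) \subset \{y : |(y-x_k)\cdot \xi(t_k)^\perp| \le \delta |y-x_k|\}$.

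To get uniformity, I would use compactness. Suppose instead that $s_k \to 0$ and there are points $y_k$ with $|y_k-x_k| = s_k$, $v(y_k)=0$, and the angular condition violated. Rescale $v$ at $x_k$ by $s_k$. The rescalings are uniformly $C^{1,1}$ and converge to a global solution with constant right-hand side $f(x)$. Because $f$ has a uniform modulus and the singular-point density bound \eqref{eq:sing-point-density} is uniform, Lemma \ref{lemma:measurestabilityforblowups} shows the limit has a null contact set. By the argument of Proposition \ref{prop:classificationofblow-ups} the limit is a quadratic vanishing on the limiting ray direction $\xi(0)$, i.e. $\frac{f(x)}{2}(y\cdot\xi(0)^\perp)^2$. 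It also vanishes at a limit of $(y_k-x_k)/s_k$ lying off that line, which is a contradiction.

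With the uniform cone in hand, I would finish as follows. Since $t_k \to 0$ and the coordinate change is bi-Lipschitz, $|x_k - x| \approx |t_k|$. Each $t_k$ is a strict local maximum of $R$, and so is $0$. Between $0$ and $t_k$ (say $t_k>0$), $R$ therefore attains a minimum at some $\tau_k \in (0,t_k)$. Choose points on the rays $\widetilde{x(\cdot,t)}$ for $t$ between $\tau_k$ and $t_k$, at height $\min(R(t),R(t_k))$, adapting the trick with $y_k$ from the proof of Lemma \ref{lemma:singularpointsarelocalmaxima}. These are contact points of $v$ at distance comparable to $|t-t_k|$ from $x_k$, and they sit in a direction transverse to $\xi(t_k)$ unless $R(t_k)-R(t)$ is large compared with $|t-t_k|$. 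The cone condition at $x_k$ and at $x$ then forces $R(t_k) - R(t) \ge c|t-t_k|$ near $t_k$, and similarly near $0$: $R$ has a uniform Lipschitz-type "tent" of fixed slope around each singular maximum. Two tents at $0$ and $t_k$ with $t_k\to0$ leave too little room for a valley of depth $\ge c|t_k|/2$ between them while staying above the region forced by the cone. The rays are nearly parallel, with $\xi$ Lipschitz, so a point on the ray through $\tau_k$ is a contact point at distance $\sim|t_k|$ from both $x$ and $x_k$ and lies outside both cones. This gives the contradiction.

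The main obstacle I expect is the uniform cone estimate, that is, making the blow-up convergence uniform over the moving centres $x_k$. That is where the uniform density modulus $\varpi$ of Proposition \ref{lemma:universalboundondensityatsingularpoint} and the measure stability of Lemma \ref{lemma:measurestabilityforblowups} must be combined carefully. The final geometric step with the tents is the second delicate point, since it needs the bi-Lipschitz control of $(r,t)$-coordinates to translate angular information into bounds on $R$.
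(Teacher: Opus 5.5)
Your proposal is correct in substance, and its analytic core is the paper's. You rescale $v$ at the moving singular centres $x_k$, use the uniform modulus $\varpi$ of Proposition~\ref{lemma:universalboundondensityatsingularpoint} with Lemma~\ref{lemma:measurestabilityforblowups} to force a null contact set in the limit, and contradict this with a contact point transverse to the ray at the blow-up scale. You identify the limit as $\frac{f(x)}{2}(y\cdot\xi(0)^\perp)^2$, while the paper uses convexity of $\Lambda(v_\infty)$; the two are equivalent.

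The difference is in packaging. The paper proves no uniform cone lemma. It uses Lemma~\ref{lemma:singularpointsarelocalmaxima} only at the limit point, to get $R(t_k)\le R(0)$. It then takes the single test point $y_k=x(R(t_k),0)\in\widetilde{x_\infty}$ and blows up at $x_k$ at scale $|y_k-x_k|$. Because $y_k$ has the same $r$-coordinate as $x_k$, it lies at distance $\sim|t_k|$ from $x_k$ in a direction uniformly transverse to the ray, by the bi-Lipschitz coordinates alone.

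Your uniform cone estimate is a somewhat stronger, reusable statement. Note two details. Your compactness argument actually gives it for \emph{every} $\delta>0$, which you need later. Also, a violating sequence may have a repeated centre; that case is handled directly by Corollary~\ref{cor:uniqueblow-upsatsingularpoints}. With the cone estimate in hand, the paper's test point finishes at once: apply the cone at $x_k$ to $x(R(t_k),0)$ if $R(t_k)\le R(0)$, and the cone at $x$ to $x(R(0),t_k)$ otherwise.

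Your endgame is more roundabout, and its last sentence does not hold up. Nothing you have established places $x(R(\tau_k),\tau_k)$ outside the cone at $x_k$. The tent inequality at $t_k$ is exactly what you obtained by forcing such contact points \emph{into} that cone, and $\tau_k$ plays no role. The tents do close the argument, but directly. Once $|t_k|$ is below the tent width, which is uniform in $k$ because $R$ has the uniform modulus $\sigma$, evaluate the tent at $t_k$ at $t=0$ and the tent at $0$ at $t=t_k$. This gives $R(t_k)-R(0)\ge c|t_k|$ and $R(0)-R(t_k)\ge c|t_k|$, which is impossible since $t_k\ne0$.

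One small correction to your setup: $f=3-\Delta u_1\ge c_0$ need not hold on all of $\overline U$. By \eqref{Laplacian u1}, $f<0$ in $U\cap\Omega_1$ wherever $r<2R(t)/3$. The bound does hold on $\overline{\{v>0\}}$, where $r\ge R(t)$. So replace $f$ by $\max\{f,c_0\}$, which is continuous and leaves $\Delta v=f\characteristic{\{v>0\}}$ unchanged; the paper also leaves this point implicit.
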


\begin{proof}
     For the sake of contradiction, suppose $\{x_k\}_{k=1}^{\infty} \subset \mathcal{S}$ is a sequence of singular tame free boundary points converging to the tame free boundary point $x_{\infty} \in \mathcal{T}$. Since the set of singular points is relatively closed by Proposition \ref{lemma:universalboundondensityatsingularpoint}, $x_{\infty}$ is a singular point as well. Let $U$, $v$, and $u_1$ be defined as in Lemma \ref{lemma:reductiontotheobstacleproblem}, and without loss of generality assume $\{x_k\}_{k=1}^{\infty} \subset U$. For each $k$, write 
    \[
    x_k = x(R(t_k), t_k)
    \]
    in the $(r,t)$-coordinates of Remark \ref{remark:constructionof(r,t)coordinates}, and let $x_{\infty} = x(R(t_{\infty}), t_{\infty})$. By choosing coordinates and parametrization we may assume that $t_{\infty} = 0$, $x_{\infty} = 0$, and 
    \[
    \widetilde{x_{\infty}} = \{t e_2 \ |\ -R(0) \leq t \leq 0\}.
    \]
    By Lemma \ref{lemma:singularpointsarelocalmaxima}, $0$ is a local maximum of $R$, so by passing to a subsequence we may assume that $R(t_k) \leq R(0)$ for all $k$. Define
    \[
    y_k := x(R(t_k), 0),
    \]
    then since $R(t_k)\leq R(0)$ we have $y_k \in \widetilde{x_{\infty}}$ for all $k$. Set 
    \[
    r_k := |y_k - x_k|,
    \]
    and consider 
    \[
    v_{k}(x) := \frac{1}{r_k^2} v(x_k + r_k x).
    \]
    Since $v \in C^{1,1}(\overline{U})$, the Arzela-Ascoli theorem implies that there is some globally defined $v_{\infty} \in C^{1,1}(\mathbf{R}^n)$ and a subsequence (not relabelled) such that 
    \[
    v_{k} \to v_{\infty}
    \]
    and 
    \[
    \nabla v_{k} \to \nabla v_{\infty}
    \]
    uniformly on compact sets. Recall from \cite{Blank01}*{Theorem 2.10(a)} that $v_{\infty}$ is a global solution to the obstacle problem 
    \[
    \Delta v_{\infty} = (3 - \Delta u_1(0)) \characteristic{\{v_\infty > 0\}}, 
    \]
    so in particular $v_{\infty}$ is convex by \cite{Caffarelli98}*{Corollary 7}.

We claim
\begin{description}
\item[Claim 1] $v_\infty \equiv 0$ on $\{(0,s)\ | \ s < 0\}$.
\item[Claim 2] There is some $\gamma = (\gamma^1, \gamma^2) \in \mathbf{S}^1$ with  $\gamma^1 \ne 0$ such that $v_{\infty}(\gamma) = 0$. 
  \item[Claim 3] $|\Lambda(v_{\infty})| = 0$. 
  \end{description}
  Of course, Claims 1 and 2 combined with the fact that $\Lambda(v_{\infty})$ is convex contradict Claim 3. It remains to establish Claims 1--3. 

    Recall from Remark \ref{remark:constructionof(r,t)coordinates} that $\xi(t_k)$ is the unit vector parallel to the ray $\widetilde{x_k}$ such that $x_k + \delta \xi(t_k) \in \Omega(v)$ for all sufficiently small $\delta > 0$. The fact that $\xi$ is Lipschitz \cite{McCannRankinZhang24+}*{Lemma 6.5} implies that $\xi(t_k) \to \xi(0) = e_2$ as $k \to \infty$.
    
  \textbf{Claim 1. } Let $s < 0$. By the uniform convergence of $v_k \rightarrow v_\infty$ we have
  \begin{align*}
    v_\infty(0,s) &= \lim_{k\rightarrow \infty }v_k(0,s) \\
               &= \lim_{k\rightarrow \infty }v_k(s\xi(t_k)) \quad \quad \text{ since $s\xi(t_k) \rightarrow (0, s)$}\\
                 &= 0, 
  \end{align*}
  where the final equality is because $x_k + r_ks\xi(t_k) \in \widetilde{x_k}$ for $s < 0$. 

  \textbf{Claim 2.}  For each $k$, define 
  \[
  \gamma_k := \frac{y_k - x_k}{r_k} \in \mathbf{S}^1,
  \]
  then since the coordinate change to $(r,t)$-coordinates is bi-Lipschitz, there is some $c > 0$ depending only on this Lipschitz constant so that for large enough $k$,
  \[
  (\gamma_k\cdot e_1)^2 \geq c > 0. 
  \]
  Possibly passing to a subsequence, there is some $\gamma_{\infty} \in \mathbf{S}^1$ such that 
  \[
  \gamma_k \to \gamma_{\infty}.
  \]
  Then, we have 
  \[
  (\gamma_{\infty} \cdot e_1)^2 \geq c > 0,
  \]
  so if we write $\gamma_{\infty} = (\gamma^1, \gamma^2)$, we have $\gamma^1 \ne 0$. Moreover, since $y_k \in \widetilde{x_{\infty}}$ for each $k$, we have
  \begin{align*}
      v_{\infty}(\gamma_{\infty}) &= \lim_{k \to \infty}v_{r_k}(\gamma_k) \\
      &= \lim_{k \to \infty}\frac{1}{r_k^2}v(x_k + r_k \gamma_k) \\
      &= \lim_{k \to \infty}\frac{1}{r_k^2}v(y_k)\\
      &= 0,
  \end{align*}
  as desired. 
  
  \textbf{Claim 3. } 
    Finally, we wish to show that $|\Lambda(v_{\infty})| = 0$. By Proposition \ref{lemma:universalboundondensityatsingularpoint}, for any $R > 0$ we have
    \begin{align*}
        \frac{|\Lambda(v_k) \cap B_R|}{|B_R|} &= \frac{|\Lambda(v) \cap B_{Rr_k}(x_k)|}{|B_{Rr_k}(x_k)|} \\
        &\leq \varpi(Rr_k).
    \end{align*}
    Applying Lemma \ref{lemma:measurestabilityforblowups}, we see that 
    \begin{align*}
        \lim_{k \to \infty}\big| |\Lambda(v_k) \cap B_R| - |\Lambda(v_{\infty}) \cap B_R|\big| &\leq \lim_{k \to \infty}|\big(\Lambda(v_k) \Delta \Lambda(v_{\infty}) \big)\cap B_R| \\
        &= 0,
    \end{align*}
    and therefore 
    \begin{align*}
        \frac{|\Lambda(v_{\infty}) \cap B_R|}{|B_R|} &= \lim_{k \to \infty}\frac{|\Lambda(v_k) \cap B_R|}{|B_R|} \\
        &\leq \lim_{k \to \infty}\varpi(Rr_k) \\
        &= 0.
    \end{align*}
    Since $R$ was arbitrary, we conclude that $|\Lambda(v_{\infty})| = 0$. 
\end{proof}

\section{Regular points on rough free boundaries}

In this section, we study the regular points of the free boundary for solutions to the obstacle problem with $C^2$ obstacle. Thanks to Blank's example \cite{Blank01}*{Theorem 7.3}, we know that in this scenario, %we cannot even expect 
the free boundary need not be locally the graph of a function. Nonetheless, using the results of \cites{Blank01, BlankHao15}, we prove that the regular free boundary is a $C^{\alpha}$-submanifold for any $\alpha \in (0,1)$, which combined with Theorem \ref{theorem:singularsetisdiscrete} yields the sharp Hausdorff dimension bound for the free boundary.

\subsection{H\"{o}lder regularity at regular points}
As a simple application of the work of Blank \cite{Blank01}, we get H\"{o}lder regularity of the free boundary at regular points. Our main input will be the classical Reifenberg theorem \cite{Reifenberg60}. 

Denote by $d_{\mathcal{H}}$ the Hausdorff distance, so if $A, B \subset \mathbf{R}^n$, 
\[
d_{\mathcal{H}}(A,B) := \max\{\sup_{a \in A}\dist(a, B), \sup_{b \in B} \dist(b,A)\}.
\]
Denote by $\mathbf{Gr}(n-1, n)$ the space of hyperplanes through the origin in $\mathbf{R}^n$. 
Given a set $V \subset \mathbf{R}^n$ and a compact set $K \subset \mathbf{R}^n$, we will define the Reifenberg modulus of flatness by 
\begin{equation}\label{eq:moduleofflatnessdefinition}
    \vartheta_{V}(r; K) := r^{-1}\sup_{x \in V \cap K}\left(\inf_{\Gamma \in \mathbf{Gr}(n-1, n)}d_{\mathcal{H}}\big(V \cap B_{r}(x), (\Gamma + x) \cap B_{r}(x)\big)\right);
\end{equation}
c.f. \cite{Blank01}*{(6.3)}, \cite{BlankHao15}*{(5.3)}, \cite{Gigli25}. The set $V$ will be called $\delta$-Reifenberg flat provided there is $r_\delta$ with $\vartheta_V(r;K) \leq 2\delta$ for all $r < r_\delta$. The set $V$ will be called \textit{Reifenberg vanishing} if $\vartheta_{V}(r; K) \to 0$ as $r \to 0$ for any compact $K$. 

The main input we will need from \cites{Blank01, BlankHao15} is the following result, which is proven in \cite{Blank01}*{Theorem 7.1}, \cite{BlankHao15}*{Theorem 5.6}:
\begin{theorem}[Reifenberg vanishing at regular points \cites{Blank01, BlankHao15}]\label{theorem:Reifenbergvanishingatregularpoints}
    Let $U \subset \mathbf{R}^n$ be a bounded domain, and suppose $v \in C^{1,1}(\overline{U})$ solves the obstacle problem $\Delta v = f \characteristic{\{v > 0\}}$, where $f \in C(\overline{U})$ satisfies $f \geq c_0 > 0$. Let $\mathcal{R} \subset F(v)$ denote the set of regular free boundary points, and suppose $K \Subset \mathcal{R}$. Then, 
    \[
    \vartheta_{\mathcal{R}}(r; K) \to 0  \text{ as $r \to 0^+$. }
    \]
\end{theorem}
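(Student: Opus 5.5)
The statement is cited from \cite{Blank01}*{Theorem 7.1} and \cite{BlankHao15}*{Theorem 5.6}. Were we to prove it directly, the plan is a compactness argument by contradiction, built on the blow-up classification of Proposition~\ref{prop:classificationofblow-ups} and the measure stability of Lemma~\ref{lemma:measurestabilityforblowups}.

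\textbf{Setting up the contradiction.} Suppose the conclusion fails. Then there are $\delta>0$, points $x_k\in\mathcal{R}\cap K$ and radii $r_k\to 0$ such that
\[
\inf_{\Gamma} d_{\mathcal H}\bigl(\mathcal R\cap B_{r_k}(x_k),(\Gamma+x_k)\cap B_{r_k}(x_k)\bigr)\ge \delta r_k .
\]
Consider the rescalings $v_k(x):=r_k^{-2}v(x_k+r_kx)$. These are uniformly $C^{1,1}$, and they solve $\Delta v_k=f(x_k+r_k\,\cdot)\characteristic{\{v_k>0\}}$. Pass to a subsequence with $x_k\to \bar x\in K$. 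By uniform continuity of $f$ (Remark~\ref{R:Heine-Cantor}), the $v_k$ converge in $C^{1}_{\mathrm{loc}}$ to a global solution $v_\infty$ of $\Delta v_\infty=f(\bar x)\characteristic{\{v_\infty>0\}}$.

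\textbf{Identifying the limit.} Because $K$ is a compact subset of $\mathcal R$ and $\mathcal S$ is relatively closed, Proposition~\ref{lemma:universalboundondensityatsingularpoint} gives a uniform density lower bound. Concretely, for $r\le r_1$ (uniformly over $K$) the contact set has density bounded below at every point of $\mathcal R$ near $K$. This density bound passes to the limit by Lemma~\ref{lemma:measurestabilityforblowups}, so $|\Lambda(v_\infty)\cap B_1|>0$ and $0\in F(v_\infty)$. Caffarelli's classification of global solutions with positive-density contact set at the origin then forces
\[
v_\infty=\tfrac{f(\bar x)}2 (x\cdot\tau)_+^2 .
\]

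\textbf{Transferring flatness.} It remains to show that $F(v_k)\cap B_1$ converges in Hausdorff distance to $\{x\cdot\tau=0\}\cap B_1$. This is the main obstacle. The easy direction is that limit points of $F(v_k)$ lie in $F(v_\infty)$, which follows from $C^1$ convergence combined with nondegeneracy, $\sup_{B_\rho(y)}v_k\ge c\rho^2$ at free boundary points. The other direction uses the same nondegeneracy: near every point of the hyperplane, $v_k$ is small but positive on one side and the contact sets converge in measure, so some free boundary point of $v_k$ must lie nearby. The approximating points must also be regular, that is, they must belong to $\mathcal R$ rather than $\mathcal S$. This follows because singular points have vanishing density at a uniform rate $\varpi$, which contradicts the contact set occupying a half-ball in the limit. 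Together these yield Hausdorff closeness $<\delta$ for large $k$, which is the desired contradiction.
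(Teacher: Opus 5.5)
The paper gives no proof here; it imports the result from \cite{Blank01}*{Theorem 7.1} and \cite{BlankHao15}*{Theorem 5.6}. Your sketch therefore has to stand on its own, and it has a genuine gap in the step \emph{identifying the limit}. The step you call the main obstacle, transferring flatness, is routine: your nondegeneracy, measure-stability and $\varpi$ argument there is fine once $v_\infty$ is known to be a half-space solution.

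The gap has two parts. First, Proposition~\ref{lemma:universalboundondensityatsingularpoint} gives no lower density bound near $K$. It bounds the density from \emph{above} at singular points, and its contrapositive only says that density $\ge\varpi(r)$ at one scale certifies regularity. Knowing $x_k\in\mathcal R$ gives density tending to $\frac12$ at each point separately, with no control at the scale $r_k$ about the moving centre $x_k$. Second, even granting $|\Lambda(v_\infty)\cap B_1|>0$ and $0\in F(v_\infty)$, the classification you invoke is false, because limits along moving centres need not be homogeneous. In $\R^2$ with $f\equiv1$, set $w(x)=\frac14(|x+e_1|^2-1)-\frac12\log|x+e_1|$ for $|x+e_1|\ge1$ and $w=0$ otherwise. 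This is a global $C^{1,1}$ solution whose contact set is the disc $\overline{B_1(-e_1)}$, with $0\in F(w)$ and contact density tending to $\frac12$ at $0$. Yet it is not a half-space solution.

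What excludes such limits is a lower bound on the contact density of $v_\infty$ about $0$ at \emph{every} scale $R\ge1$. Positive density at infinity forces the convex set $\Lambda(v_\infty)$ to have a recession cone with interior, and Caffarelli's classification of global solutions then gives a half-space solution. For this you need a uniform statement: for some $\theta\in(0,\frac12)$, $\rho>0$ and neighbourhood $N$ of $K$, one has $|\Lambda(v)\cap B_s(x)|\ge\theta|B_s|$ for all $x\in F(v)\cap N$ and $s\le\rho$. This is the real content of Blank's theorem, and you assert it without proof.

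It can be supplied by a second compactness argument. Suppose it fails along $x_k\to\bar x\in K$ and $s_k\to0$. Choose $\rho$ so that the density about $\bar x$ exceeds some $\theta'>\theta$ at all scales $s\le\rho$. Let $s_k'$ be the largest $s\in[s_k,\rho]$ at which the density about $x_k$ equals $\theta$. Continuity of the density in the centre forces $s_k'\to0$. The blow-up at $(x_k,s_k')$ is then a global solution with $0$ on its free boundary, density $\ge\theta$ at all scales $R\ge1$, and density exactly $\theta$ at $R=1$. This contradicts the half-space classification. With this lemma, the bound for $v_k$ holds on $B_R$ for all $R\le\rho/r_k\to\infty$ and passes to $v_\infty$ by Lemma~\ref{lemma:measurestabilityforblowups}, and the rest of your argument goes through.
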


By combining this fact with the classical Reifenberg theorem, we get H\"{o}lder 
continuity of the free boundary in a neighbourhood of any regular point,
for the solution of any 
obstacle problem with a $C^2$ obstacle. 

   \begin{lemma}[Free boundary is a H\"{o}lder submanifold at regular points]\label{lemma:freeboundaryisholdersubmanifold}
    Let $U \subset \mathbf{R}^n$ be a domain, and suppose $v \in C^{1,1}(\overline{U})$ solves the obstacle problem $\Delta v =  f\characteristic{\{v > 0\}}$, where $f \in C(\overline{U})$ satisfies $f \geq c_0 > 0$. Then, for each regular point $x \in F(v)$, there is a neighbourhood $x \in V \subset F(v)$ such that for every $\alpha \in (0,1)$ there exists a bijection 
    \[
    \Phi: B^{n-1} \to V
    \]
    from the $(n-1)$-ball $B^{n-1}\subset \mathbf{R}^{n-1}$ of radius $1$,
    such that $\Phi$ and $\Phi^{-1}$ are $\alpha$-H\"{o}lder continuous. 
  \end{lemma}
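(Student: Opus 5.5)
We combine Theorem~\ref{theorem:Reifenbergvanishingatregularpoints} with Reifenberg's topological disc theorem \cite{Reifenberg60}, in the sharpened form that gives H\"older control. Reifenberg's theorem says: for each $\alpha\in(0,1)$ there is $\delta=\delta(n,\alpha)>0$ such that any closed set $E$ which is $\delta$-Reifenberg flat at all scales $r<r_0$ around points of $E\cap B_{r_0}(x)$ is, inside a smaller ball, the image of an $(n-1)$-disc under a map $\Phi$ with $\Phi,\Phi^{-1}$ both $\alpha$-H\"older. The exponent tends to $1$ as $\delta\to0$.

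\textbf{Step 1: a neighbourhood consisting of regular points.} Fix a regular point $x\in F(v)$. By Proposition~\ref{lemma:universalboundondensityatsingularpoint}, the singular set $\mathcal S$ is relatively closed in $F(v)$. Hence there is $\rho>0$ with $F(v)\cap \overline{B_{2\rho}(x)}\subset \mathcal R$ and $\overline{B_{2\rho}(x)}\subset U$. Set $K:=F(v)\cap\overline{B_{\rho}(x)}$. This set is compact, since $F(v)$ is relatively closed in $U$, and it is contained in $\mathcal R$.

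\textbf{Step 2: uniform flatness on $K$.} Theorem~\ref{theorem:Reifenbergvanishingatregularpoints} gives $\vartheta_{\mathcal R}(r;K)\to0$ as $r\to0^+$. Within $B_\rho$ we have $\mathcal R\cap B_r(y)=F(v)\cap B_r(y)$ for $y\in K$ and $r<\rho$, so the same holds for $F(v)$. Consequently, for each $\alpha$ there is $r_\alpha\le\rho$ such that $F(v)$ is $\delta(n,\alpha)$-Reifenberg flat at all points of $K$ and all scales $r<r_\alpha$.

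\textbf{Step 3: apply Reifenberg.} Reifenberg's theorem, applied in $B_{r_\alpha}(x)$, yields a bi-H\"older parametrization of $F(v)\cap B_{c r_\alpha}(x)$ by a disc. We then rescale the domain to the unit ball $B^{n-1}$.

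\textbf{Main difficulty.} The statement asks for one neighbourhood $V$ that works for every $\alpha$, while the radius $r_\alpha$ shrinks as $\alpha\to1$. To handle this, we fix $V$ once using some $\alpha_0$, say $\alpha_0=1/2$, taking $V$ to be the bi-H\"older image of the open disc. For $\alpha>\alpha_0$ we cover the compact closure of a slightly smaller patch by finitely many small Reifenberg discs of radius $r_\alpha$. For $n=2$ these are arcs, and we concatenate the local bi-H\"older charts along the curve. This is easy in dimension $n=2$, which is the case needed for the monopolist, since connected $\alpha$-bi-H\"older arcs glue monotonically. In general $n$ it is cleaner to invoke the version of Reifenberg's theorem for vanishing flatness, which produces a single homeomorphism that is $\alpha$-bi-H\"older for every $\alpha<1$ on a fixed neighbourhood. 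The second technical check is that Reifenberg's hypothesis needs flatness near every point of $E$, including near $\partial B_\rho$. We handle it by working in $B_{\rho/2}$, so that every ball used is centred at a point of $K$.
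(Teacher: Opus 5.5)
Your proposal is correct and follows essentially the same route as the paper's proof: relative openness of the regular set (which you phrase as relative closedness of $\mathcal S$), Reifenberg vanishing on a compact neighbourhood via Theorem~\ref{theorem:Reifenbergvanishingatregularpoints}, and the classical Reifenberg theorem with $\alpha(\delta)\to 1$, followed by shrinking and rescaling to $B^{n-1}$. You also take care to make $V$ independent of $\alpha$, either through the vanishing-flatness form of Reifenberg's construction or by gluing arcs when $n=2$, where any small exponent loss at the junctions is harmless because $\alpha<1$ is arbitrary. This addresses a point the paper passes over with ``by possibly taking $V$ to be slightly smaller,'' but it does not change the argument.
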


\begin{proof}
  Let $x \in F(v)$ be a regular point. By \cite{Blank01}*{Remark 4.9}, the set of regular points of $F(v)$ are relatively open.
  So, by Theorem \ref{theorem:Reifenbergvanishingatregularpoints}, we conclude that if $V \Subset F(v)$ is a sufficiently small neighbourhood of $x$ so that $\overline{V}$ is contained in the set of regular points, then $V$ is Reifenberg vanishing and, in particular, $\delta$-Reifenberg flat for every $\delta >0$.  The classical Reifenberg theorem \cite{Blank01}*{Theorem 6.2}, \cite{Gigli25}*{Theorem 1.1}, \cite{Reifenberg60}, implies that because $V$ is $\delta$-Reifenberg flat  there is an $\alpha = \alpha(\delta)$ and an $\alpha$-bi-H\"{o}lder homeomorphism $\Phi$ between an open subset of $\mathbf{R}^{n-1}$ and $V$. Moreover, $\alpha(\delta) \to 1$ as $\delta \to 0$, so we may take  $\alpha \in (0,1)$ arbitrarily close to $1$. By possibly taking $V$ to be slightly smaller and choosing coordinates for $\mathbf{R}^n$, we get the desired map 
    \[
    \Phi:B^{n-1} \to V. 
    \]
\end{proof}

\begin{corollary}[Hausdorff dimension of regular points]\label{cor:Hausdorffdimensionofregularpoints}
    Let $U \subset \mathbf{R}^n$ be a domain, and suppose $v \in C^{1,1}(\overline{U})$ solves the obstacle problem $\Delta v =  f\characteristic{\{v > 0\}}$, where $f \in C(\overline{U})$ satisfies $f \geq c_0 > 0$. Let $\mathcal{R} \subset F(v)$ denote the set of regular free boundary points. Then, 
    \[
    \mathrm{dim}_{\mathcal{H}}(\mathcal{R}) \leq n-1. 
    \]
\end{corollary}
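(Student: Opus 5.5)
The plan is to deduce the bound from Lemma \ref{lemma:freeboundaryisholdersubmanifold}, using the fact that Hölder maps increase Hausdorff dimension by at most a controlled factor, and then let $\alpha \to 1$.

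First, cover $\mathcal{R}$ by countably many relatively open pieces. By \cite{Blank01}*{Remark 4.9} the regular set is relatively open in $F(v)$. For each regular point $x$, Lemma \ref{lemma:freeboundaryisholdersubmanifold} provides a neighbourhood $V_x \subset F(v)$ consisting of regular points. Since $\mathbf{R}^n$ is second countable, $\mathcal{R}$ is Lindelöf, so countably many of these $V_{x_j}$ cover $\mathcal{R}$. Because Hausdorff dimension is countably stable, $\dim_{\mathcal H}(\mathcal{R}) = \sup_j \dim_{\mathcal H}(V_{x_j})$. It therefore suffices to show $\dim_{\mathcal H}(V_x) \le n-1$ for each such neighbourhood.

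Next, fix $V = V_x$ and $\alpha \in (0,1)$, and take the $\alpha$-Hölder surjection $\Phi : B^{n-1} \to V$ given by the lemma. The standard estimate is that an $\alpha$-Hölder map with constant $L$ sends a set of diameter $d$ into one of diameter at most $L d^\alpha$. So any cover of $B^{n-1}$ by sets $E_i$ yields a cover of $V$ by the sets $\Phi(E_i)$, with
\[
\sum_i \diam(\Phi(E_i))^{s/\alpha} \le L^{s/\alpha} \sum_i \diam(E_i)^{s}.
\]
Consequently $\mathcal{H}^{s/\alpha}(V) \le L^{s/\alpha}\mathcal{H}^s(B^{n-1})$, and hence $\dim_{\mathcal H}(V) \le (n-1)/\alpha$.

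One caveat: the Hölder constant might only be local, or the lemma's neighbourhood might depend on $\alpha$. In that case I would first shrink to a compactly contained sub-ball where the Hölder bound is uniform, and absorb this into the countable cover, which is harmless.

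Finally, since $\alpha \in (0,1)$ was arbitrary, letting $\alpha \to 1$ gives $\dim_{\mathcal H}(V) \le n-1$, and therefore $\dim_{\mathcal H}(\mathcal{R}) \le n-1$. The only delicate point is ensuring the Hölder constants are uniform on each piece. Everything else is the routine Hölder image estimate; note that only $\Phi$ needs to be Hölder, not $\Phi^{-1}$, for the upper bound.
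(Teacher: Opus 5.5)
Your proposal is correct and follows essentially the same route as the paper: apply Lemma~\ref{lemma:freeboundaryisholdersubmanifold}, use the H\"older image estimate (the paper cites Falconer's Proposition 2.3) to get $\dim_{\mathcal H}(V)\le (n-1)/\alpha$, let $\alpha\to 1$, and conclude by countable stability over a countable cover of $\mathcal R$. Your Lindel\"of covering argument and your remarks on the uniformity of the H\"older constants only make explicit details that the paper leaves implicit.
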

\begin{proof}
 Fix $\alpha < 1$ and take $V$ and $\Phi$ as in Lemma \ref{lemma:freeboundaryisholdersubmanifold}, then since $\Phi$ is $\alpha$-H\"{o}lder we have by \cite{Falconer03}*{Proposition 2.3} that
    \[
    \dim_{\mathcal{H}}(V) \leq \frac{1}{\alpha}\dim_{\mathcal{H}}(B^{n-1}).
    \]
    Since we may take $\alpha$ arbitrarily close to $1$, we conclude that
    \[
    \mathrm{dim}_{\mathcal{H}}(V) \leq n-1.
    \]
    Since $\mathcal{R}$ can be covered by countably many such neighbourhoods, we conclude that 
    \[
    \mathrm{dim}_{\mathcal{H}}(\mathcal{R}) \leq n-1. 
    \]
\end{proof}

Combining this with Theorem \ref{theorem:singularsetisdiscrete}, we get
\begin{corollary}[H\"{o}lder regularity and sharp dimension bound]\label{cor:holderregularityandsharpdimensionbound}
    Let $u$ solve the monopolist's problem on a convex domain $X \subset \mathbf{R}^2$ satisfying hypotheses (a)--(c) of Theorem \ref{thm:mainresults}. 
    Let $\mathcal{T} \subset \Gamma$ be the set of tame free boundary points, and $\mathcal{S}$ the subset of singular points,
    as in \eqref{Omega1+}--\eqref{eq:DefSingular}. Then, any connected component of  $\mathcal{T} \setminus \mathcal{S}$ is an $\alpha$-H\"{o}lder submanifold for any $\alpha \in (0,1)$, and 
    \[
    \mathrm{dim}_{\mathcal{H}}(\mathcal{T}) \leq 1.
    \]
\end{corollary}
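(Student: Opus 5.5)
The plan is to localize: cover $\mathcal T$ by the neighbourhoods $U$ of Lemma~\ref{lemma:reductiontotheobstacleproblem}, one for each tame free boundary point, and prove both conclusions inside each $U$. Since $\mathcal T\subset X$ is a subset of $\mathbf R^2$, it is second countable, so countably many such neighbourhoods $U_j$ cover $\mathcal T$. Hausdorff dimension is countably stable, and H\"older regularity of components is a local property, so working in a single $U_j$ suffices for both assertions.

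Within a fixed $U$, set $v=u-u_1$. By Lemma~\ref{lemma:reductiontotheobstacleproblem}, $v\in C^{1,1}(\overline U)$, $v\ge0$, and $F(v)=U\cap\Gamma$. On $\Omega(v)\subset\Omega_2$ we have $\Delta u=3$, so $v$ solves $\Delta v=f\characteristic{\{v>0\}}$ with $f:=3-\Delta u_1$. By Proposition~\ref{lemma:C^2obstacle}, $f\in C(\overline U)$, and by \eqref{eq:obstaclelaplacianbound} (shrinking $U$ if needed) $f\ge c_0>0$. Hence the hypotheses of Theorem~\ref{theorem:blankcaffarellialternative}, Lemma~\ref{lemma:freeboundaryisholdersubmanifold} and Corollary~\ref{cor:Hausdorffdimensionofregularpoints} all hold. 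I then check that the regular points of $F(v)$ in the obstacle-problem sense coincide with $\mathcal R\cap U$ as defined in \eqref{eq:DefRegular}. The identification $\Lambda(v)=\overline U\cap\Omega_1$ gives $|\Lambda(v)\cap B_r|+|\Omega_2\cap B_r|=|B_r|$ for small $r$, so density $1/2$ of $\Lambda(v)$ is equivalent to density $1/2$ of $\Omega_2$. Moreover $F(v)=U\cap\Gamma=U\cap\mathcal T$ once $U$ is taken inside the foliated tame region $\mathcal N_{\mathrm{ext}}$.

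For the dimension bound, Corollary~\ref{cor:Hausdorffdimensionofregularpoints} gives $\dim_{\mathcal H}(\mathcal R\cap U)\le1$. By Theorem~\ref{theorem:singularsetisdiscrete}, $\mathcal S$ is discrete in $\mathcal T$, hence countable, so $\dim_{\mathcal H}\mathcal S=0$. Then $\dim_{\mathcal H}(\mathcal T\cap U)\le1$, and taking the countable union over the $U_j$ gives $\dim_{\mathcal H}\mathcal T\le1$.

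For the H\"older structure, let $C$ be a connected component of $\mathcal T\setminus\mathcal S=\mathcal R$. Each $x\in C$ has, by Lemma~\ref{lemma:freeboundaryisholdersubmanifold} with $n=2$, a neighbourhood $V\subset F(v)$ that is $\alpha$-bi-H\"older to the interval $B^1=(-1,1)$ for any $\alpha<1$, with $V$ consisting of regular points because $\mathcal R$ is relatively open \cite{Blank01}*{Remark 4.9}. So $C$ is a connected topological $1$-manifold whose charts are locally $\alpha$-bi-H\"older.

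I expect the main obstacle to be this final step: deducing that $C$ is locally bi-H\"older equivalent to the open unit interval rather than to a circle. A compact component would be a closed Jordan curve in $\mathcal R$. I would rule this out as follows: such a curve would bound a region that is contained in $\Omega_2$ or in $\Omega_1$, and either case contradicts the ray structure, since every ray reaches $\partial X$. If this global argument proves delicate, it suffices to observe that the assertion is local: every point of $C$ has a neighbourhood in $C$ that is $\alpha$-bi-H\"older to an open interval, which is what the statement requires.
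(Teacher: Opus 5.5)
Your proposal is correct and follows the paper's route: the paper's proof simply combines Lemma~\ref{lemma:freeboundaryisholdersubmanifold} and Corollary~\ref{cor:Hausdorffdimensionofregularpoints} with the discreteness of $\mathcal S$ from Theorem~\ref{theorem:singularsetisdiscrete}. Your countable localization, your identification of obstacle-regular points with $\mathcal R$, and your shrinking of $U$ so that $f\ge c_0$ on all of $\overline U$ make explicit what the paper leaves implicit; as you anticipated, the circle-versus-interval concern is moot because the claim is local, so the Jordan-curve argument is unnecessary.
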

\begin{proof}
    This follows immediately from Lemma \ref{lemma:freeboundaryisholdersubmanifold} and Corollary \ref{cor:Hausdorffdimensionofregularpoints} since the set of singular points is discrete by Theorem \ref{theorem:singularsetisdiscrete}. 
\end{proof}

\begin{remark}[Parameterization caveat]
    \label{R:not Dini}
    Lemma \ref{lemma:freeboundaryisholdersubmanifold}
    need not imply H\"older (nor Dini) continuity of $R(\cdot)$.
    Otherwise  $\Delta u$ would be H\"older (or Dini) continuous by Proposition~\ref{lemma:C^2obstacle}, so the additional hypothesis of Theorem~\ref{thm:mainresults}\eqref{item:Dini partial regularity} would be satisfied. This is enough to conclude $C^1_{\mathrm{loc}}$-smoothness of $\mathcal T \setminus \mathcal S$, as we will next demonstrate.
\end{remark}

\subsection{Dini condition gives partial regularity at tame free boundary points}

Without additional control on the rate of vanishing of the Reifenberg modulus of flatness, we do not know whether the hyperplanes $\Gamma$ attaining the infimum in \eqref{eq:moduleofflatnessdefinition} converge to some limit as $r \to 0$; this is an obstruction to trying to obtain an improvement upon the H\"{o}lder regularity guaranteed by the Reifenberg theorem. However, when $\Delta u_1$ is Dini continuous, Blank~\cite{Blank01} shows that the modulus of flatness \eqref{eq:moduleofflatnessdefinition} also satisfies the Dini condition, allowing him to prove $C^1$ regularity at regular points \cite{Blank01}*{Theorem 0.1}. We will combine this result with the bootstrapping in \cite{McCannRankinZhang24+} to prove a conditional partial regularity result for the tame free boundary. 
By Proposition \ref{lemma:C^2obstacle},
notice that $\Delta u_1$ is Dini continuous if $R$ is Dini continuous. 
Recalling the definitions of $\mathcal S$ and $\mathcal T$ from \eqref{Omega1+}--\eqref{eq:DefSingular}:

\begin{proposition}[Partial regularity of the tame free boundary]\label{prop:conditionalregularity}
Take $u_1$ to be defined as in Lemma \ref{lemma:reductiontotheobstacleproblem}. 
\begin{enumerate}
\item\label{I:C1} If a relatively open subset $A \subseteq \mathcal{T}$ is a $C^1$ curve, then $A$ is $C^{\infty}$ outside of a closed, nowhere dense subset of $\mathcal T$. 
\item\label{I:Dini} If instead $\Delta u_1$ is Dini continuous, then $\mathcal{T}$ is a $C^1$ curve except at the relatively discrete set $\mathcal{S}$, and is $C^{\infty}$ outside of a closed, nowhere dense subset of $\mathcal{T}$.
\end{enumerate}
\end{proposition}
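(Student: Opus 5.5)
Part \eqref{I:Dini} reduces to part \eqref{I:C1}. If $\Delta u_1$ is Dini continuous, then $f := 3-\Delta u_1$ is Dini continuous on $U$ and bounded below by $c_0$ (Lemma~\ref{lemma:reductiontotheobstacleproblem}). Blank's theorem \cite{Blank01}*{Theorem 0.1} then makes $F(v)$ a $C^1$ curve near each regular point. Since $\mathcal S$ is relatively discrete (Theorem~\ref{theorem:singularsetisdiscrete}) and $\mathcal R$ is relatively open, $A:=\mathcal T\setminus\mathcal S$ is a relatively open $C^1$ curve. Applying \eqref{I:C1} to $A$ gives smoothness off a closed nowhere dense subset of $A$. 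Adding the discrete set $\mathcal S$ keeps that set closed and nowhere dense in $\mathcal T$, because a discrete set is nowhere dense in a curve with no isolated points.

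For \eqref{I:C1}, the plan is to turn $C^1$ regularity of the free boundary into regularity of the ray-length function $R(t)$, and then bootstrap as in \cite{McCannRankinZhang24+}. In $(r,t)$ coordinates, $\mathcal T\cap U=\{x(R(t),t)\}$.

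\textbf{Step 1: extract the good region.} Look at the portion of $A$ where the curve $A$ is transversal to the rays, meaning its tangent is not parallel to $\xi(t)$. On that portion, the implicit function theorem together with the bi-Lipschitz $C^{1}$-type structure of $(t,r)\mapsto\gamma(t)+r\xi(t)$ makes $R$ a $C^1$ function of $t$. Let $Z$ be the closed set of points of $A$ where the tangent is parallel to the ray. I expect $Z$ to have empty interior in $A$: on any arc of $A$ where the tangent stays parallel to the rays, the arc would be a segment contained in a single ray. That contradicts $A\subset\partial\Omega_2$ meeting each tame ray in a single endpoint, since $\tilde x\cap\partial\Omega_2=\{x(R(t),t)\}$.

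\textbf{Step 2: bootstrap on $A\setminus Z$.} There $R\in C^1$, hence $R\in C^{0,1}$ and in particular Dini. Proposition~\ref{lemma:C^2obstacle} then gives $\Delta u_1$ with a Lipschitz-type modulus. Better, formula \eqref{Laplacian u1}, together with $\xi$ Lipschitz and $|\dot\xi|$ expressed through \eqref{Ray length}, should give $f=3-\Delta u_1\in C^{\alpha}$. The inputs there are $\nabla u$ on $\partial X$ (a $C^{0,1}$ function of $t$, since $u\in C^{1,1}$) and $R$. Caffarelli's theory \cite{Caffarelli98} then upgrades regular points to a $C^{1,\alpha}$ free boundary, so $R\in C^{1,\alpha}$. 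Feeding this back improves $\xi$, $|\dot\xi|$ and the boundary data $\nabla u(\gamma(t))$. The last one is controlled by the Neumann/Euler–Lagrange relations along $\partial X$ in \cite{McCannRankinZhang24+}, and should gain a derivative because $u$ solves $\Delta u=3$ in $\Omega_2$ with smooth data up to a $C^{k,\alpha}$ free boundary. Higher-order free boundary regularity for obstacle problems with $C^{k,\alpha}$ right-hand side (Kinderlehrer–Nirenberg/Caffarelli) then gives $R\in C^{k+1,\alpha}$, and induction yields $C^\infty$.

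\textbf{Main obstacle.} The hardest part is closing the bootstrap loop. One must show that $\nabla u|_{\partial X}$ along $\gamma$, which feeds $|\dot\xi|$ and hence $f$, gains regularity from the free boundary regularity. I would try to do this by differentiating \eqref{Ray length} and the ray equations from \cite{McCannRankinZhang24+}*{Section 6}, as in their bootstrapping. If this only works where the tangency set is avoided, that is acceptable, because that set is already the exceptional closed set $Z$. One also has to check that the exceptional set stays closed after removing the singular points.
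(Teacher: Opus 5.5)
Your plan matches the paper's proof. Part (2) is reduced to (1) via Blank's Theorem 0.1 and the discreteness of $\mathcal S$. For (1), the paper likewise splits $A$ into two pieces: the open set $\mathcal C$ where the tangent is transverse to $\xi$, on which $R$ is Lipschitz and the bootstrapping of \cite{McCannRankinZhang24+} gives $C^\infty$; and its closed complement. That complement has empty interior because, by Picard--Lindel\"of for the Lipschitz field $\xi$, an arc tangent to $\xi$ must lie in a single ray.

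The only difference is that the paper does not rebuild the bootstrap as your Step 2 tries to. Your suggested mechanism, Schauder theory for $\Delta u=3$ in $\Omega_2$, would not by itself make $\nabla u(\gamma(t))$ gain a derivative. On $\Gamma$ the Cauchy data of $u$ are those of $u_1$, which are already expressed through $\nabla u(\gamma(t))$. So you should just feed Lipschitz $R$ into the cited bootstrap, as your fallback proposes.
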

\begin{proof}

    \eqref{I:C1} Suppose some relatively open $A \subseteq \mathcal{T}$ is a $C^1$ curve. Choose the tangent vector $\tau : A \to \mathbf{S}^1$ continuously. Since the ray direction $\xi$ is Lipschitz by \cite{McCannRankinZhang24+}*{Lemma 6.5}, the map $x \mapsto |\xi(x) \cdot \tau(x)|$ is continuous, so the set
    \[
    \mathcal{C} := \{x \in A  \ | \ |\tau(x)\cdot \xi(x)| \ne 1 \}
    \]
    is open. Moreover, since $F(v)$ is the graph of $R$ in the bi-Lipschitz $(r,t)$-coordinates of Remark \ref{remark:constructionof(r,t)coordinates}, we see that $R$ is Lipschitz in a neighbourhood of every $x_0 \in \mathcal{C}$, and thus every connected component of $\mathcal{C}$ is a $C^{\infty}$ curve by the bootstrapping in \cite{McCannRankinZhang24+}. 

    It remains to consider the (relatively) closed set $A \setminus \mathcal{C}$. We claim that this set has empty interior. Indeed, suppose that $W \subset A \setminus \mathcal{C}$ is open and connected. Since $A$ is a $C^1$ curve, we may find a $C^1$ unit-speed parametrization $\eta: (-\delta, \delta) \to W$ of $W$. By definition of $\tau$, we can assume $\eta$ is parameterized so that
    \[
    \eta'(t) = \tau(\eta(t))
    \]
    for each $t \in (-\delta, \delta)$. But since $W \subset A \setminus \mathcal{C}$, this implies that
    \[
    |\eta'(t) \cdot \xi(\eta(t))| = 1
    \]
    for every $t \in (-\delta, \delta)$. So, since $\eta$ is $C^1$ and $\xi$ is Lipschitz, we can reparameterize $\eta$ to ensure that for every $t \in (-\delta, \delta)$,
    \[
    \eta'(t) = \xi(\eta(t)). 
    \]
    By definition of $\xi$, we see that the function
    \[
    f(t) = \eta(0) + t \xi(\eta(0))
    \]
    also satisfies the ordinary differential equation
    \[
    \bigg\{ \begin{matrix}
        f(0) = \eta(0), \\
        f'(t) = \xi(f(t)).
    \end{matrix}
    \]
    So, since $\xi$ is Lipschitz, we may apply the Picard-Lindel\"{o}f theorem to conclude that $\eta(t) = f(t)$. Therefore, for any $x_0 \in W$, we have that $W$ is contained entirely in the ray $\widetilde{x_0}$, contradicting the continuity of $R$. 

\eqref{I:Dini} Now, assume $\Delta u_1$ is Dini continuous. Since Dini continuity of $\Delta u_1$ implies that $F(v)$ is $C^{1}$ in a neighbourhood of any regular point \cite{Blank01}*{Theorem 0.1}, it follows that $\mathcal{T}$ is a $C^1$ curve except at the relatively discrete subset $\mathcal S \subset \mathcal T$ of singular points. We have therefore shown that every connected component $A$ of $\mathcal{T} \setminus \mathcal{S}$ is $C^{\infty}$ outside of a closed, nowhere dense subset of $\mathcal{T}$, so we conclude that $\mathcal{T}$ is $C^{\infty}$ except at a closed, nowhere dense subset.
\end{proof}

The result of Proposition \ref{prop:conditionalregularity}\eqref{I:C1}
becomes unconditional in view of Remark \ref{R:revision}.

\bibliographystyle{plain}\bibliography{bibliography.bib}

\end{document}